\newtheorem{thm}{Theorem}
\newtheorem{cor}[thm]{Corollary}
\newtheorem{lem}[thm]{Lemma}
\newtheorem{prop}[thm]{Proposition}
\newtheorem{defn}[thm]{Definition}
\newtheorem{rem}[thm]{Remark}
\newcommand{\interior}{\mathrm{int}}
\newcommand{\blie}[1]{\mathrm{BLIE}_{#1}}
\newcommand{\flie}[1]{\mathrm{FLIE}_{#1}}
\newcommand{\cD}{\mathcal{D}}
\newcommand{\bS}{\mathbb{S}^{n-1}}
\newcommand{\ghd}[1]{d_{GH}^{#1}}
\newcommand{\hd}[1]{d_{H}^{#1}}
\newcommand{\N}{\mathbb{N}}
\newcommand{\R}{\mathbb{R}}
\newcommand{\cR}{\mathcal{R}}
\newcommand{\Z}{\mathbb{Z}}
\newcommand{\eps}{\varepsilon}
\newcommand{\CR}[1]{\cR_{X,S}}
\newcommand{\CRR}[1]{\cR_{#1}}
\newcommand{\p}{\partial}
\DeclareMathOperator{\diam}{diam}
\title[Lipschitz Stability of Travel Time Data]{Lipschitz Stability of Travel Time Data}
\date{\today}
\keywords{geometric inverse problems, length spaces, Gromov--Hausdorff distance, isometric embeddings}
\author{Joonas Ilmavirta}
\address{Department of Mathematics and Statistics,University of Jyv\"askyl\"a, Jyv\"askyl\"a,  Finland} 
\author{Antti Kykk\"anen}
\address{Department of Computational Applied Mathematics and Operations Research, Rice University, Houston, TX, USA}
\author{Matti Lassas}
\address{Department of Mathematics and Statistics, University of Helsinki, Finland}
\author{Teemu Saksala}
\address{Department of Mathematics, North Carolina State University, Raleigh, NC, USA   (\tt{tssaksal@ncsu.edu})}
\author{Andrew Shedlock}
\address{Department of Mathematics, North Carolina State University, Raleigh, NC, USA}
\begin{document}



\begin{abstract}
We prove that the reconstruction of a certain type of length spaces from their travel time data on a closed subset is Lipschitz stable. The travel time data is the set of distance functions from the entire space, measured on the chosen closed subset. The case of a Riemannian manifold with boundary with the boundary as the measurement set appears is a classical geometric inverse problem arising from Gel'fand's inverse boundary spectral problem. Examples of spaces satisfying our assumptions include some non-simple Riemannian manifolds, Euclidean domains with non-trivial topology, and metric trees.
\end{abstract}

\maketitle



\maketitle

\section{Introduction}

\textit{The travel time map} of a compact length space $X$ with a closed measurement set $S$ takes any point of the space to the continuous function which measures the distance from this point to every point in the measurement set. The range of this map is called the \textit{travel time data}. In this paper we study the choices of $X$ and $S$ for which the travel time map is a topological embedding and a $\eps$-local isometry, for some $\eps>0$ (see Definition \ref{defn:eps_isometry}), to the space of continuous functions on the measurement set $S$. Under these assumptions we show the travel time data determines the metric space Lipschitz stably. 

When the length space is a smooth compact Riemmannian manifold, and the measurement set is the boundary of the manifold, we derive a partial characterization of those manifolds whose travel time map satisfy the necessary $\eps$-local isometry property. On a Riemannian manifold with strictly convex boundary a sufficient condition would be that no geodesic is longer that twice the injectivity radius, while a necessary condition for our results is that no geodesic is longer than twice the diameter of the manifold. In the appendix of this paper we provide some concrete examples of Riemannian manifolds which fall under the scope of our main results. 

In our first main result Theorem \ref{thm:stability_2} we show that if the travel time maps of two compact length spaces, with a common closed measurement set, are both  topological embeddings, the inverses of the travel time maps are $\eps$-local isometries, and  the diameters of these spaces do not exceed some number $D>0$ then the Gromov--Hausdorff distance of these spaces can be estimated from above by a uniform constant times the Hausdorff distance of their travel time data. This constant depends only on the ratio of the numbers $D$ and $\eps$. In order to drop the diameter assumption of Theorem \ref{thm:stability_2} we introduce in Definition \ref{defn:truncated_GH_dist} the \textit{$\eps$-truncated Gromov--Hausdorff distance} which compares the similarity of  compact metric spaces only up to scale $\eps$.  In our second main result Theorem \ref{thm:stability_1} we show that if the inverses of the travel time maps of two compact length spaces with a common closed measurement sets are both $\eps$-local isometries, then the $\eps$-truncated Gromov--Hausdorff distance of these spaces is at most the Hausdorff distance of their travel time data. Furthermore, we show in Corollary \ref{cor:stability_2} that  the travel time map determines a smooth compact Finsler or Riemannian manifold with boundary up to a smooth isometry if the inverse of the travel time map of this manifold is $\eps$-local isometry for some $\eps>0$. For this result we prove a Myers--Steenrod theorem (Proposition \ref{prop:MS_thm}) for manifolds with boundary.


\subsection{Travel Time Data and the Main Results}

Every path-connected metric space $(X,d)$ has an induced \textit{length metric} $d_L$.
In this metric the distance $d_L(x,y)$ between $x,y \in X$ is the (possibly infinite) infimum over the lengths
\[
L(\gamma)=\sup \left\{ 
\sum_{i=0}^N d(\gamma(t_{i-1}),\gamma(t_i)):\:
0=t_0<t_1<t_2<\ldots <t_{N-1}<t_N=1
\right\},
\]
of all curves $\gamma\colon [0,1] \to X$ starting from $\gamma(0)=x$ and ending at $y=\gamma(1)$.
Clearly $d\leq d_L$.
The metric $d_L$ is symmetric, positive, and satisfies the triangle inequality (cf. \cite[Excersice 2.1.2. and Definition 2.3.1.]{burago2001course}), but it need not be finite.
The metric $d$ is called \textit{intrinsic} if $d=d_L$ and in this case $(X,d)$ is called a \textit{length space}.
The length metric induced by $d_{L}$ is simply $d_L$ itself.
Riemannian manifolds are length spaces by definition.

In this paper we study the stability of recovering a space from its \emph{travel time data}.

\begin{defn}
\label{defn:TTD}
Let $(X,d)$ be a compact length space and $S\subset X$ a closed set. For every point $p\in X$ its \emph{travel time function} $r_p:S \to \R$ is defined by the formula $r_p(z) = d(p, z)$. \emph{The travel time map} of the length space $(X,d)$ is then given by the formula
\begin{equation}
\label{eq:map_R}
\CR{} \colon(X,d) \to (C(S),\|\cdot\|_\infty), \quad \CR{} (p) = r_p.
\end{equation}
The image set 
$
\CR{} (X)\subset C(S)
$
of the travel time map is called the travel time data of the length space $(X,d)$. 
\end{defn}
For an inverse problem point of view our aim is to stably recover the compact length space $(X,d)$ from the travel time data of some ``fixed'' closed measurement set $S \subset X$. In particular, the travel time data is an unlabelled collection of travel time functions. The locations of point sources $p \in X$ related to the functions $r_p$ are unknown. It follows from the triangular inequality that the travel time map is always $1$-Lipschitz and due to compactness of $X$ the travel time data 
$
\CR{} (X)\subset C(S)
$
is  a compact subset of the Banach space $(C(S), \|\cdot\|_{\infty})$.

The travel time map is known to be a topological embedding if $X$ is a smooth manifold, $d$ is given by a smooth Riemannian metric, and $S$ is the smooth boundary of $X$ \cite[Lemma 3.30]{Katchalov2001}. Recently, this result was extended to reversible Finsler manifolds \cite{dehoop2020determination} by the first, the third and the fourth author and on simple gas giant metrics \cite{de2024geometric} by the first two authors.
A Finsler metric $F$ on a manifold $X$ is said to be reversible if $F(v)=F(-v)$ for all $v \in TX$, where $TX$ is the tangent bundle of $X$. Reversible Finsler manifolds have a symmetric distance function and they are length spaces. Geometrically, gas giant metrics correspond to a compact Riemannian manifold with boundary whose metric tensor has a conformal blow-up near the boundary.

On the other hand, if $X=\mathbb{S}^2$ and $S$ is the equator then the respective travel time map is not one-to-one since the travel time functions of the north and south poles agree. Thus ``small'' measurement sets can be problematic for the injectivity while sufficiently large ones are not. For instance, it was shown in \cite{helin2016correlation} by the third and the fourth author that if $X$ is any complete connected Riemannian manifold and $S$ is a closure of an open subset of $X$ then the respective travel time map is again a topological embedding. In Proposition \ref{thm:local_isometry} of the current paper we show that the same is true if $X$ is a Busemann G-space. 

Busemann G-spaces are metric spaces whose distance function satisfies a convexity condition. In particular, compact Busemann G-spaces are always length spaces whose distance minimizing curves do not split. Furthermore, on a Busemann space a shortest curve between any two points, that are close enough to each other, is always extendable as distance minimizing curves. The Busemann conjecture, which is a special case of the famous Bing--Borsuk conjecture, states that every Busemann G-space is a topological manifold. This conjecture is true for dimensions 1 through 4 \cite{halverson2008bing}. 

In the following definition we explain how to compare the closeness of the travel time data given by two length spaces with a ``common''  measurement set.

\begin{defn}
\label{defn:Haus_of_travel_time_data}
Let $(X_1,d_1)$ and $(X_2,d_2)$ be two compact length spaces with closed measurement sets $S_1 \subset X_1$ and $S_2 \subset X_2$.  If there is a homeomorphism $\phi\colon S_1 \to S_2$, we define 
\[
\CRR{X_2,S_2,\phi}(X_2):=\{d_2(q,\phi(\cdot))\colon S_1 \to \R | \: q \in X_2\}\subset C(S_1).
\] 

We say that \textit{the $\phi$-distance of travel time data} of the length spaces $(X_1,d_1)$ and $(X_2,d_2)$ is the number
\[
\hd{C(S_1)}(\CRR{X_1,S_1}(X_1),\CRR{X_2,S_2,\phi}(X_2)),
\]
where $\hd{C(S_1)}$ is the Hausdorff distance of $C(S_1)$.

Moreover, the travel time data of length spaces $(X_1,d_1)$ and $(X_2,d_2)$  \textit{coincide} if 
\[
\hd{C(S_1)}(\CRR{X_1,S_1}(X_1),\CRR{X_2,S_2,\phi}(X_2))=0,
\]
for some homeomorphism $\phi\colon S_1\to S_2$.
\end{defn}

In this paper we show that if the travel time data of two compact length spaces, with a common measurement set, are close to each other then the metric spaces need to be close to each other. To quantify the closeness of compact metric spaces $X$ and $Y$ we rely on the \emph{Gromov--Hausdorff distance:}
\[
\begin{split}
\ghd{}(X,Y):=
\inf
\{
\hd{Z}(f(X),g(Y));
&
\: Z\text{ is a metric space, }
f\colon X\to Z
\text{ and }
\\& \: g\colon Y\to Z
\text{ are isometric embeddings}
\}.
\end{split}
\]




To specify the class of compact length spaces studied in this paper we introduce the following local concept of a metric isometry.

\begin{defn}
\label{defn:eps_isometry}
    Let $\eps>0$, also let  $(X,d_X)$ and $(Y,d_Y)$ be metric spaces. We say that a continuous map $\phi \colon X \to Y$ is an $\eps$-local isometry if it satisfies the following property: If $p,q \in X$ are such that $d_X(p,q)<\eps$ then
    \[
    d_X(p,q)=d_Y(\phi(p),\phi(q)).
    \]
\end{defn}

We are ready to define the class of metric spaces we are going to work with.

\begin{defn}
\label{defn:special}
    Let $(X,d)$ be a compact length space with a closed measurement set $S$ such that $\CR{}\colon (X,d) \to (C(S), \|\cdot\|_{\infty})$
    is a topological embedding and let $\eps > 0$. We say that $(X,d)$ with measurement set $S$ is 
\begin{itemize}
        \item[(I)] Forward $\eps$-Locally Isometrically Embeddable ($\flie{\eps}$) if $\CR{}\colon (X,d) \to (C(S), \|\cdot\|_{\infty})$ is an $\eps$-local isometry.
        
        \item[(II)] Backward $\eps$-Locally Isometrically Embeddable ($\blie{\eps}$) if $\CR{}^{-1}\colon (\CR{}(X),\|\cdot\|_{\infty}) \to (X,d)$ is an $\eps$-local isometry.
    \end{itemize}
\end{defn}

Clearly the $\flie{\eps}$ and $\blie{\eps}$-spaces are nested with respect to the $\eps$-parameter. We note that it is possible for a bijective map $f:X\to Y$ to be an $\eps$-local isometry, while the inverse map $f^{-1}\colon Y\to X$ is not. For example, consider the function $f\colon [0,\frac{3\pi}{2}] \to \mathbb{S}^1$, where $\mathbb{S}^1$ has the round metric, and $f(t) = (\cos(t),\sin(t))$. Then $f$ is a  $\pi$-local isometry but $f^{-1}$ is only a $\frac{\pi}{2}$-local isometry. Thus, $\flie{\eps}$ and $\blie{\eps}$ conditions are not necessary equivalent, but they are strongly connected to each other since the travel time map is always $1$-Lipschitz. 
In Proposition \ref{thm:delta_test} of this paper we show that if $X$ and $S$ satisfy the $\blie{\eps}$-property for some $\eps>0$ then they also satisfy the $\flie{\eps}$-property. Conversely, in Proposition \ref{prop:(eps,delta)_isometry} we use a compactness argument to show that if $X$ and $S$ are $\flie{\eps}$ then they are also $\blie{\eps_0}$ for some possibly smaller $\eps_0 >0$.
%
%
Due to the remarks we made earlier about the injectivity of the travel time map we show in Proposition \ref{thm:local_isometry} that $\CR{}$ is a topological embedding if $X$ and $S$ are in the following two categories:
\begin{itemize}
    \item[(a)] $X$ is a compact manifold and $S \subset X$ is the boundary of $X$.
    
    \item[(b)] $X$ is a compact Busemann G-space and the measurement set $S\subset X$ is the closure of a non-empty open set.
\end{itemize}
A smooth compact Riemannian manifold $X$ with boundary $S$ is called \textit{simple}
if $S$  is strictly convex, $X$ is simply connected and no geodesic of $X$ has conjugate points. See for instance \cite[Preface]{paternain2023geometric}. It was shown recently in \cite[Proposition 6]{ilmavirta2023three} by the first and the fourth author that the travel time map $\CR{}$ of a simple Riemannian manifold is always an isometry. Thus, a simple Riemannian manifold satisfies the $\flie{\eps}$ and $\blie{\eps}$-properties for every $\eps>0$. 

In Section \ref{sec:scope} we study some geometric properties which yield the $\flie{\eps}$-property. For instance, if $(N,g)$ is a closed Riemannian manifold and $X\subset N$ is an open set with smooth strictly convex boundary $S$ then the triplet $(X,g,S)$ satisfied the  $\flie{\eps}$-condition for some $\eps>0$ if for any $N$-geodesic each connected component of this geodesic in $X$ is strictly shorter than twice the injectivity radius of the ambient manifold $(N,g)$. This observation extends the scope of this paper beyond simple manifolds since if $X \subset N$ was simple then each of its points has a normal neighborhood containing $X$. In the Appendix \ref{Sec:Appendix} of this paper we provide examples of manifolds with boundary which satisfy the $\flie{\eps}$-property for some $\eps>0$, but nor for all. In particular we give an example of a non-convex and non-simply connected $\flie{\eps}$-domains in $\R^n$, 
and provide a numerical evidence for the existence of a $\flie{\eps}$-manifold with strictly convex boundary and interior conjugate points.

In Proposition \ref{thm:local_isometry} we show that the travel time map of each compact Busemann G-space with a  measurement set, which is a closure of a non-empty open set, is always a topological embedding.  Due to the Busemann conjecture we focus on Riemannian manifolds in our set of examples. In Lemma \ref{lem:lenght_of_geos}, we show that a closed Riemannian manifold is not in the $\flie{\eps}$-class for any $\eps>0$ if the set $X \setminus S$ has a trapped geodesic. Clearly, the $\flie{\eps}$-condition is valid for all $\eps>0$ if the set $X \setminus S$ is contained in a simple Riemannian manifold. In Appendix \ref{Sec:Appendix} we show that $\mathbb{S}^2$ is $\flie{\eps}$ for some but not all $\eps>0$ if the measurement set $S$ contains a trace of a geodesic connecting two antipodal points. Thus, there are examples of $\flie{\eps}$-manifolds when $X\setminus S$ is not simple. 
 
\begin{rem}
In the case (b) we are allowed to measure the distances through the set $S$ while in case (a),  the boundary works as an ``obstacle'' and we do not measure any distances through it even if the manifold is embedded into some ambient space. 

Examples (a) and (b) are mutually exclusive since shortest curves on a manifold with  boundary can branch. For instance, consider a planar domain whose boundary has a strictly concave part.

The assumption of non-branching distance minimizing curves in (b) is convenient as under this assumption the travel time map $\CR{}$ on a space $X$ is always injective, if the measurement set $S$ is a closure of an open set. In particular the incjectivity of the map $\CR{}$ is not dependent on the location of $S$. 
\end{rem}

The first main theorem of the current paper is the following.

\begin{thm}[Stability of Travel Time Data: First Version]
\label{thm:stability_2}
Let $\eps, D > 0$. Let $(X_1,d_1)$ and $(X_2,d_2)$ be two compact length spaces with closed measurement sets $S_1 \subset X_1$ and $S_2 \subset X_2$.  
Suppose that the diameters of $X_1$ and $X_2$ are less than $D$, and both spaces are $\blie{\eps}$. 
If there is a homeomorphism $\phi\colon S_1 \to S_2$,
then
\begin{equation}
    \label{eq:dist_of_manifolds}
      \ghd{} ((X_1,d_1),(X_2,d_2))
    \leq\Big(\frac{2D}{\eps} + 1\Big) 
    \hd{C(S_1)}(\CRR{X_1,S_1}(X_1),\CRR{X_2,S_2,\phi} (X_2)).
\end{equation}
If the travel time data of spaces $X_1$ and $X_2$ coincide, then these metric spaces are isometric. 
\end{thm}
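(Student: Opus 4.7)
My plan is to bound the Gromov--Hausdorff distance via the equivalent correspondence formulation $\ghd{}(X_1,X_2) \leq \tfrac{1}{2}\operatorname{dis}(R)$, where $R \subset X_1 \times X_2$ is any correspondence and $\operatorname{dis}(R) = \sup_{(p,q),(p',q')\in R}|d_1(p,p')-d_2(q,q')|$. Write $\delta$ for the Hausdorff distance on the right-hand side of (\ref{eq:dist_of_manifolds}). We may assume $\delta<\eps/4$, since otherwise $(2D/\eps+1)\delta \geq D/2$, which is a trivial upper bound for the Gromov--Hausdorff distance between two spaces of diameter at most $D$.

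First I would define
\[
R := \{(p,q)\in X_1\times X_2 : \|r_p - d_2(q,\phi(\cdot))\|_{C(S_1)} \leq \delta\}.
\]
The defining inequality, together with compactness of $X_1,X_2$ and the $1$-Lipschitz continuity of $q \mapsto d_2(q,\phi(\cdot))$ from $X_2$ to $C(S_1)$, makes $R$ a closed correspondence: for each $p\in X_1$ the definition of Hausdorff distance gives $q_n \in X_2$ with the relevant norm bounded by $\delta+1/n$, and any accumulation point $q$ of $q_n$ satisfies $(p,q)\in R$. Symmetry handles the other direction.

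The heart of the proof is the distortion estimate. Fix $(p_1,q_1),(p_2,q_2)\in R$. Since $X_1$ is a compact length space, the Hopf--Rinow theorem provides an arc-length minimizing geodesic from $p_1$ to $p_2$ of length $L=d_1(p_1,p_2)\leq D$. Set $N := \lceil 2L/\eps\rceil$, so $N\leq 2D/\eps+1$, and let $p_1=x_0,x_1,\dots,x_N=p_2$ be equally spaced along the geodesic, so $d_1(x_{i-1},x_i)=L/N\leq \eps/2$. Pick $y_i\in X_2$ with $(x_i,y_i)\in R$ for $0<i<N$, and set $y_0=q_1$, $y_N=q_2$. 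Two applications of the triangle inequality in $C(S_1)$ together with the $1$-Lipschitz property of $\CRR{X_1,S_1}$ give
\[
\|d_2(y_{i-1},\phi(\cdot)) - d_2(y_i,\phi(\cdot))\|_{C(S_1)} \leq 2\delta + d_1(x_{i-1},x_i) \leq 2\delta + \eps/2 < \eps.
\]
Since $\phi$ is a bijection onto $S_2$, this sup norm coincides with $\|r_{y_{i-1}}-r_{y_i}\|_{C(S_2)}$, so the $\blie{\eps}$ hypothesis on $(X_2,S_2)$ gives $d_2(y_{i-1},y_i) \leq 2\delta + d_1(x_{i-1},x_i)$. Summing,
\[
d_2(q_1,q_2) \leq \sum_{i=1}^N d_2(y_{i-1},y_i) \leq 2N\delta + L \leq (4D/\eps+2)\delta + d_1(p_1,p_2).
\]

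The reverse inequality follows by swapping the roles of $X_1$ and $X_2$ (subdividing a minimizing geodesic in $X_2$ and invoking $\blie{\eps}$ of $(X_1,S_1)$). Hence $\operatorname{dis}(R)\leq (4D/\eps+2)\delta$ and (\ref{eq:dist_of_manifolds}) follows. When $\delta=0$ we conclude $\ghd{}(X_1,X_2)=0$, which for compact metric spaces forces isometry. The main subtlety is calibrating the subdivision: the step size must be small enough that, even after absorbing the $2\delta$ error from the correspondence, each consecutive pair $(y_{i-1},y_i)$ lies in the $\eps$-window where the backward local isometry hypothesis genuinely promotes $C(S_1)$-closeness into the metric of $X_2$.
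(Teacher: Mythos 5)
Your proof is correct, and it takes a genuinely different route from the paper's. The paper factors the argument into two separately stated results: it first proves Theorem \ref{thm:stability_1}, namely $\ghd{\eps}(X_1,X_2)\leq \hd{C(S_1)}(\CRR{X_1,S_1}(X_1),\CRR{X_2,S_2,\phi}(X_2))$, using Proposition \ref{prop:(eps,delta)_isometry} (the travel time maps are global isometries between the $\eps$-truncated spaces) together with Lemma \ref{lem:props_of_trunc} on how Hausdorff distances behave under truncation; it then converts the truncated bound into the untruncated one via Proposition \ref{prop:ghd_equivalent}(II), $\ghd{}\leq(\frac{2D}{\eps}+1)\ghd{\eps}$, whose proof contains exactly the chaining idea you use (subdividing a minimizing curve into $N\leq \frac{2D}{\eps}+1$ pieces of length at most $\frac{\eps}{2}$ and summing, after reducing to the regime where the relevant quantity is below $\frac{\eps}{4}$). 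You instead build a single explicit correspondence directly from the data and bound its distortion in one pass, invoking $\blie{\eps}$ at each link of the chain to promote $C(S_2)$-closeness of consecutive travel time functions to equality with the $d_2$-distance; the constants come out identically. What your approach buys is a shorter, self-contained proof of Theorem \ref{thm:stability_2} that needs none of the truncation machinery; what the paper's factorization buys is Theorem \ref{thm:stability_1} as a standalone result valid without any diameter bound, plus a reusable comparison between $\ghd{}$ and $\ghd{\eps}$. Two cosmetic points: your $N=\lceil 2L/\eps\rceil$ degenerates to $0$ when $p_1=p_2$, where one should argue directly that $d_2(q_1,q_2)\leq 2\delta$ (or just take $N\geq 1$); and closedness of $R$ is not actually needed, only that it is a correspondence, which follows from compactness of $X_2$ and the $1$-Lipschitz continuity you cite.
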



In order to drop the diameter assumption in Theorem \ref{thm:stability_2} we study the similarities of compact metric spaces up to a scale $\eps>0$. For this purpose we introduce the following two definitions. The first definition introduces a certain \textit{truncation} of a metric space.

\begin{defn}
\label{defn:truncation}
    Let $(X,d)$ be a metric space and $\eps>0$. We call the metric space $(X,d_\eps)$, where 
    \[
    d_\eps(x,y):=\min\{d(x,y), \eps\}, \quad \text{ for all } x,y \in X
    \]
    the $\eps$-truncation of $(X,d)$.
\end{defn}

It is straight forward to prove that if $(X,d)$ is a metric space then all of its $\eps$-truncations are also metric spaces whose diameter do not exceed $\eps$. Furthermore, the spaces $(X,d)$ and $(X,d_\eps)$ are homeomorphic. The next definition introduces the \emph{$\eps$-truncated Gromov--Hausdorff distance} which is used to compare the closeness of $\eps$-truncated metric spaces.

\begin{defn}
\label{defn:truncated_GH_dist}
    Let $\eps>0$, and let $(X,d_X)$ and $(Y,d_Y)$ be compact metric spaces. Then we call the number
    \[
    \ghd{\eps}(X,Y):=\ghd{}((X,d_{X,\eps}),(Y,d_{Y,\eps})).
    \]
    the $\eps$-truncated Gromov--Hausdorff distance.
\end{defn}

Our second main result is as follows.

\begin{thm}[Stability of Travel Time Data: Second Version]
\label{thm:stability_1}
Let $\eps > 0$. 
Let $(X_1,d_1)$ and $(X_2,d_2)$ be two compact length spaces with closed measurement sets $S_1 \subset X_1$ and $S_2 \subset X_2$. If these spaces are both $\blie{\eps}$ and if there is a homeomorphism $\phi\colon S_1 \to S_2$
then
\begin{equation}
    \label{eq:dist_of_manifolds_2}
    \ghd{\eps} ((X_1,S_1),(X_2,S_2))\leq \hd{C(S_1)}(\CRR{X_1,S_1}(X_1),\CRR{X_2,S_2,\phi}(X_2)).
\end{equation}
If the travel time data of spaces $X_1$ and $X_2$ coincide, then these metric spaces are isometric.
\end{thm}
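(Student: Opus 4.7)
The plan is to exhibit an explicit correspondence $R \subset X_1 \times X_2$ whose distortion in the $\eps$-truncated metrics is at most $2\delta$, where $\delta$ denotes the right-hand side of \eqref{eq:dist_of_manifolds_2}. Since the Gromov--Hausdorff distance equals half the infimum of distortions over all correspondences, this will imply \eqref{eq:dist_of_manifolds_2}. Writing $r_p(s) := d_1(p, s)$ for $p \in X_1$ and $r^\phi_q(s) := d_2(q, \phi(s))$ for $q \in X_2$, both viewed as elements of $C(S_1)$, the natural choice is
\[
R := \bigl\{ (p, q) \in X_1 \times X_2 : \|r_p - r^\phi_q\|_{C(S_1)} \leq \delta \bigr\},
\]
which projects surjectively onto both factors by compactness of the two travel time data sets.

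The key observation, applied in each space separately, is that the $\blie{\eps}$ hypothesis combined with the fact that the travel time map is always $1$-Lipschitz forces the identities
\[
d_{1, \eps}(p, p') = \min\bigl(\|r_p - r_{p'}\|_\infty, \eps\bigr), \qquad d_{2, \eps}(q, q') = \min\bigl(\|r^\phi_q - r^\phi_{q'}\|_\infty, \eps\bigr).
\]
Indeed the $1$-Lipschitz bound gives $\|r_p - r_{p'}\|_\infty \leq d_1(p, p')$; if this sup norm is strictly less than $\eps$, the $\blie{\eps}$ property promotes the inequality to equality, so both sides coincide; otherwise both truncations are $\eps$. The identity for $X_2$ uses that $\phi \colon S_1 \to S_2$ is a homeomorphism, so $\|r^\phi_q - r^\phi_{q'}\|_{C(S_1)}$ agrees with the sup norm on $C(S_2)$ featured in the $\blie{\eps}$ condition for $X_2$.

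For any $(p, q), (p', q') \in R$, two applications of the triangle inequality in $C(S_1)$ yield
\[
\bigl| \|r_p - r_{p'}\|_\infty - \|r^\phi_q - r^\phi_{q'}\|_\infty \bigr| \leq 2\delta,
\]
and the previous identities together with the $1$-Lipschitzness of $\min(\cdot, \eps)$ turn this into $|d_{1, \eps}(p, p') - d_{2, \eps}(q, q')| \leq 2\delta$, which is the required distortion estimate.

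For the second assertion, when $\delta = 0$ the correspondence $R$ collapses to the graph of a bijection $\psi \colon X_1 \to X_2$, well-defined because $\blie{\eps}$ requires the travel time maps to be topological embeddings and hence injective. The previous identities then exhibit $\psi$ as an $\eps$-local metric isometry, and a standard subdivision argument along curves of length close to $d_1(p, p')$ promotes this to a global isometry by exploiting the length space structure of both $X_i$. The main conceptual step, and the one I expect to require the most care to write down, is the identification of $d_{i, \eps}$ with the truncated pullback of the sup norm; once that is in place, the entire stability estimate collapses to a single application of the triangle inequality in $C(S_1)$.
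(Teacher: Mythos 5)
Your argument is correct. The central identity you isolate --- that the $\blie{\eps}$ property together with $1$-Lipschitzness of the travel time map forces $d_{i,\eps}(p,p')=\min(\|r_p-r_{p'}\|_\infty,\eps)$ --- is exactly the content of Proposition \ref{prop:(eps,delta)_isometry} (the truncated travel time map is a metric isometry onto its image in $(C(S_1),d_{\infty,\eps})$), and the paper's proof of Theorem \ref{thm:stability_1} rests on the same fact. Where you diverge is in how that fact is converted into a bound on $\ghd{\eps}$: the paper views $\CRR{X_1,S_1}$ and $\Phi\circ\CRR{X_2,S_2}$ as isometric embeddings of the two truncated spaces into the common truncated space $(C(S_1),d_{\infty,\eps})$ and then invokes the definition of the Gromov--Hausdorff distance, which requires a comparison of Hausdorff distances in the truncated versus untruncated sup metric (Lemma \ref{lem:props_of_trunc}, item 3) and hence a case split according to whether the data distance is below $\eps$. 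You instead build the explicit correspondence $R$ of all $\delta$-matched pairs and bound its distortion via the triangle inequality and the $1$-Lipschitzness of $t\mapsto\min(t,\eps)$, then apply the distortion characterization \eqref{eq:GH_with_correspondece}. This handles both regimes uniformly, with no case analysis and no appeal to the Hausdorff-distance comparison lemma, at the cost of being slightly less geometric. Your treatment of the coincidence case (collapse of $R$ to the graph of a bijection that is an isometry of the truncated spaces, promoted to a global isometry by subdividing curves) is the same mechanism the paper packages as item 6 of Lemma \ref{lem:props_of_trunc}; it does rely, as you note, on the injectivity of the travel time maps, which is built into Definition \ref{defn:special}.
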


Theorem~\ref{thm:stability_1} yields the following corollary.

\begin{cor}
\label{cor:stability_2}
Let $(M_1,g_1)$ and $(M_2,g_2)$ be two compact reversible Finsler manifolds. Suppose that
\begin{itemize}
    \item[(a)] $S_i\subset M_i$ is the smooth boundary of $M_i$ for both $i \in \{1,2\}$ or
    \item[(b)] $M_i$ has no boundary and $S_i\subset M_i$ is a closure of a non-empty open set for both $i \in \{1,2\}$.
\end{itemize}
If for $i \in \{1,2\}$ the Finsler manifolds $(M_i,g_i)$ with the closed measurement sets $S_i$ are both in the $\blie{\eps}$-class for some $\eps>0$, and
if their travel time data coincide then the Finsler manifolds $(M_1,g_1)$ and $(M_2,g_2)$ are Finslerian isometric.
\end{cor}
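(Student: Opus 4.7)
The plan is to deduce the corollary by combining Theorem~\ref{thm:stability_1} with the Myers--Steenrod type Proposition~\ref{prop:MS_thm}. First, since the travel time data of $(M_1,g_1)$ and $(M_2,g_2)$ coincide, the Hausdorff distance on the right-hand side of \eqref{eq:dist_of_manifolds_2} vanishes, so the final sentence of Theorem~\ref{thm:stability_1} immediately produces a metric isometry
\[
F\colon (M_1,d_{g_1}) \to (M_2,d_{g_2}),
\]
where $d_{g_i}$ denotes the (reversible) Finslerian length distance on $M_i$. Note that the hypotheses of Theorem~\ref{thm:stability_1} are available in either case (a) or (b), since reversible Finsler manifolds are compact length spaces, and the $\blie{\eps}$-property is assumed directly.

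It then remains to upgrade $F$ from a metric isometry to a smooth Finslerian isometry. In case (b), where neither $M_i$ has a boundary, this is the Finslerian version of the classical Myers--Steenrod theorem. In case (a), where $S_i = \partial M_i$, one first verifies that $F$ carries $\partial M_1$ onto $\partial M_2$. This should be read off from an intrinsic metric characterization of boundary points, e.g.\ that interior points possess small metric balls modelled on full tangent balls while boundary points only admit ``half-ball'' metric neighborhoods, and such behavior is preserved by any metric isometry. Once boundary preservation is in hand, Proposition~\ref{prop:MS_thm}, the Myers--Steenrod theorem for manifolds with boundary proved earlier in the paper, promotes $F$ to a smooth Finslerian isometry between $(M_1,g_1)$ and $(M_2,g_2)$.

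The main obstacle I anticipate is the boundary handling in case (a), namely the preservation $F(\partial M_1)=\partial M_2$. If Proposition~\ref{prop:MS_thm} is already stated so that it takes a metric isometry of compact Finsler manifolds with boundary and automatically produces a smooth isometry preserving the boundaries --- which would be the natural formulation of a with-boundary Myers--Steenrod statement --- then the corollary reduces to a direct concatenation of Theorem~\ref{thm:stability_1} and Proposition~\ref{prop:MS_thm}, and the proof amounts to verifying that the hypotheses of both results are supplied by the assumptions of the corollary in each of the two cases.
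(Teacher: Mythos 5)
Your proposal is correct and follows essentially the same route as the paper: apply the final sentence of Theorem~\ref{thm:stability_1} to obtain a metric isometry from coinciding data, then upgrade it via the classical Finslerian Myers--Steenrod theorem in the boundaryless case and via Proposition~\ref{prop:MS_thm} in the with-boundary case. The boundary-preservation issue you flag is already handled inside Proposition~\ref{prop:MS_thm} by invariance of domain, so no extra argument is needed; incidentally, you assign the two Myers--Steenrod variants to cases (a) and (b) correctly, whereas the paper's own two-line proof appears to swap the labels.
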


We end this subsection by pointing out that theorems \ref{thm:stability_2} and \ref{thm:stability_1} can be seen as generalizations of \cite[Theorem 9]{ilmavirta2023three} in which a Lipschitz-stability was provided for the travel time data of simple Riemannian manifolds. Moreover, these results should be seen as quantitative versions of \cite{de2024geometric, de2021determination, helin2016correlation, Katchalov2001}. The result of Corollary \ref{cor:stability_2} is not new but its proof, which is based on the Meyers-Steenrod theorem: Every metric isometry between smooth Riemannian or Finsler manifolds is a smooth map which preserves the metric, has not been presented in the earlier literature in this generality. Our new proof streamlines the old uniqueness proofs as it does not require an independent reconstructions of topology, differentiable structure or the metric from the travel time data. Alas, it comes with the prize of the additional $\blie{\eps}$-assumption.
Up to the best knowledge of the authors theorems \ref{thm:stability_2} and \ref{thm:stability_1} are the first more generally applicable Lipschitz stability results for the travel time data. Thus, these results form an important stepping stone for any future computational study related this topic.

\begin{rem}
Metric trees\footnote{A tree is a connected graph without loops. The nodes of degree one are called leaves. A metric tree consists of all the edges as intervals (not necessarily unit length), glued together at the vertices.} with leaves as the measurement set have an isometric boundary distance embedding and thus satisfy our  $\flie{\eps}$ and $\blie{\eps}$ -conditions for all $\eps>0$.
Therefore,  theorems \ref{thm:stability_2} and \ref{thm:stability_1} hold for these graphs.
For controlling structural similarities of metric trees with the Gromov--Hausdorff distance, we refer to~\cite{Vandaele-graph}.
\end{rem}

\subsection{Some related inverse problems}
 The problem of determining the isometry type of a compact Riemannian manifold from its travel time data, as in Definition \ref{defn:TTD}, was introduced for the first time in~\cite{kurylev1997multidimensional}. The reconstructions of a smooth atlas on the manifold and the metric tensor was originally considered in~\cite{Katchalov2001}. In contrast to the paper at hand, the uniqueness result does not need any extra assumption for the geometry. If the travel time data is measured only on some open subset of the boundary then the unique recovery of a compact Riemannian manifold with strictly convex boundary is still possible \cite{pavlechko2022uniqueness}. 


The travel time data is related to many other geometric inverse problems. For instance, its recovery is a crucial step in proving uniqueness for Gel'fand's inverse boundary spectral problem~\cite{Katchalov2001}.
Gel'fand's problem concerns the question whether the boundary spectral data $(\p M, (\lambda_j, \p_\nu \phi_j|_{\p M})_{j=1}^\infty) $ determine $(M,g)$ up to isometry, when $(\lambda_j, \phi_j)$ are the Dirichlet eigenvalues and the corresponding $L^2$-orthonormal eigenfunctions of the Laplace--Beltrami operator.
Belishev and Kurylev provide an
affirmative answer to this problem in~\cite{belishev1992reconstruction} by developing the celebrated boundary control method. Stability for the boundary spectral data was developed in \cite{burago2020quantitative}. Stability for interior spectral data on a closed manifold was developed in \cite{bosi2022reconstruction}. Both of these papers prove a $\log$-$\log$-type stability results, while the optimal stability of this problem is conjectured to rather be of $\log$-type.

In \cite{katsuda2007stability} the authors studied a question of approximating a Riemannian manifold under the assumption: For a finite set of receivers $R\subset \p M$ one can measure the travel times $d(p,\cdot)|_{R}$ for finitely many $p \in P\subset  M^\interior$ under the \textit{a priori} assumption that $R \subset \p M$ is $\eps$-dense and that $\{d(p,\cdot)|_{R}: \: p \in P\}\subset \{d(p,\cdot)|_{R}: \: p \in M^\interior\}$ is also $\eps$-dense.  Thus $\{d(p,\cdot)|_{R}: \: p \in P\}$ is a finite measurement.  The authors construct an approximate finite metric space  $M_\eps$ and show that the Gromov--Hausdorff distance of $M$ and $M_\eps$ is proportional to some positive power of $\eps$. Thus, they provided a H\"older stability result for the travel time data. In \cite{katsuda2007stability} an independent travel time measurement is made for each interior source point in $P$, whereas in \cite{de2021stable} the authors studied the approximate reconstruction of a simple Riemannian manifold 
by measuring the arrival times of wave fronts produced by several point sources, that go off at unknown times, and moreover, the signals from the different point sources are mixed together.  To describe the similarity of two metric spaces `with the same boundary' the authors defined a labeled Gromov--Hausdorff distance. This is an extension of the classical Gromov--Hausdorff distance which compares both the similarity of the metric spaces and the sameness of the boundaries --- with a fixed model space for the boundary. In addition to reconstructing a discrete metric space approximation of $(M,g)$, the authors in \cite{de2021stable} estimated the density of the point sources and established an explicit error bound for the reconstruction in the labeled Gromov--Hausdorff sense.
Lipschitz stability of travel time data for simple Riemannian manifolds was recently proved in \cite{ilmavirta2023three}.


Taking the difference of the arrival times one obtains a \textit{boundary distance difference function}
$D_{p}(z_1,z_2):=d(p,z_1)-d(p,z_2)$
for all $z_1,z_2 \in \p M$.
In~\cite{lassas2015determination}
it was shown that if $U\subset N$ is an open subset of a closed
Riemannian manifold $(N,g)$ with a non-empty interior, then
\emph{distance difference data} 
$((U,g|_{U}), \{D_p\colon U\times U
\to \R \:| \:p \in N\})$
determine $(N,g)$ up to an isometry. This result was generalized for complete Riemannian manifolds~\cite{ivanov2018distance} and for compact Riemannian manifolds with boundary~\cite{de2018inverse, ivanov2020distance}. As part of \cite{ivanov2018distance}, the author established a stability without an explicit modulus of continuity. In \cite{ilmavirta2023three} the authors establish Lipscitz stability of the distance difference data for simple Riemannian manifolds. 

If the sign in the definition of the distance difference functions is
changed, we arrive in the distance sum functions,
$D^+_p(z_1,z_2)=d(z_1,p)+d(z_2,p)$
for all $p\in M$ and $z_1,z_2\in \p M$.
These functions give the lengths of the broken geodesics, that is, the
union of the shortest geodesics connecting~$z_1$ to~$p$ and the
shortest geodesics connecting~$p$ to~$z_2$.
Also, the gradients of $D^+_p(z_1,z_2)$ with respect to~$z_1$ and $z_2$ give the velocity
vectors of these geodesics.
The inverse problem of determining the
manifold $(M,g)$ from the \emph{broken geodesic data}, consisting of
the initial and the final points and directions, and the total length
of the broken geodesics, has been considered in~\cite{kurylev2010rigidity}. The proof is based on a reduction from the broken geodesic data to the travel time data. Unfortunately, the proof only works for compact smooth manifold of dimension three and higher, while under the simplicity assumption the dimensional restriction was removed in \cite{ilmavirta2023three}.
A different variant of broken geodesic data was recently considered in~\cite{meyerson2020stitching}.

The authors of~\cite{de2021determination} went beyond the conventional Riemannian setting and studied the recovery of a compact Finsler manifold from its travel time data. In contrast to earlier Riemannian results \cite{Katchalov2001, kurylev1997multidimensional} the data only determines the topological and smooth structures, but not the global geometry. However, the Finsler function $F\colon TM \to [0,\infty)$ can be recovered in a closure of the set $G(M,F)\subset TM$, which consists of points $(p,v)\in TM$ such that the corresponding geodesic~$\gamma_{p,v}$ is distance minimizing to the terminal boundary point. 
In~\cite{de2020foliated} the main result of~\cite{de2021determination} was utilized to generalize the result of~\cite{kurylev2010rigidity}, about the broken geodesic data, on reversible Finsler manifolds, satisfying a convex foliation condition.

\subsection{Organization of this paper}
In Proposition \ref{prop:(eps,delta)_isometry} of Section \ref{sec:scope} we show that each $\flie{\eps}$-space satisfies the $\blie{\eps}$-condition, after possible choosing a smaller $\eps$. The end of the section is reserved for further properties of $\flie{\eps}$-spaces.

In Section \ref{sec:length_Spaces} we survey some properties of length spaces and show in Proposition \ref{thm:isometry_of_length_spaces} that the travel time map of any $\flie{\eps}$-space preserves the length structure. In Lemma \ref{lem:props_of_trunc} we provide a list of many important properties between metrics and their $\eps$-truncations.

In Proposition \ref{thm:delta_test} of Section \ref{sec:verification} we provide a data driven method that can be used to find the largest $\eps>0$ such that a manifold which is \textit{a priori} known to be $\flie{e_0}$ for some $\eps_0>0$ is also $\blie{\eps}$. We also show in this same proposition that the $\blie{\eps}$-condition is stronger than the $\flie{\eps}$-condition. Thus, in our main theorems we do not need to assume any local isometry properties for the travel time map, only for its inverse.

In Section \ref{Sec:proofs} we provide the Myers--Steenrod theorem (Proposition \ref{prop:MS_thm}) for compact manifolds with boundary. Also, in Proposition \ref{prop:ghd_equivalent} we derive the Lipschitz constant appearing in the estimate \eqref{eq:dist_of_data}. Finally, we prove the main theorems \ref{thm:stability_2} and \ref{thm:stability_1}.

The Appendix is dedicated to examples of $\flie{\eps}$-manifolds.

\subsection*{Acknowledgements}

J.I. and A.K. were supported by the Research Council of Finland (Flagship of Advanced Mathematics for Sensing Imaging and Modelling grant 359208; Centre of Excellence of Inverse Modelling and Imaging grant 353092; and other grants 351665, 351656, 358047).
A.K. was also supported by the Finnish Academy of Science and
Letters.

M.L. was supported by PDE-Inverse project of the European Research Council of the European Union. M.L. was also supported by Academy of Finland grants 273979 and 284715. 

T.S. and A.S. were supported by the National Science Foundation (DMS-2204997). T.S. and A.S. would like to express their gratitude to Matti Lassas and Lauri Oksanen who kindly hosted them at University of Helsinki, Finland. Most of the scientific discoveries reported in this paper were made during that visit.

Views and opinions expressed are those of the authors only and do not necessarily reflect those of the European Union or the other funding organizations. Neither the European Union nor the other funding organizations can be held responsible for them. 

\section{The scope of this paper: When is a compact length space  is \texorpdfstring{$\flie{\eps}$}{BE}
or 
\texorpdfstring{$\blie{\eps}$}{BE}?}
\label{sec:scope}

\begin{prop}
\label{thm:local_isometry}

The travel time map $\CR{}$ of a compact length space $(X,d)$ with a closed measurement set $S$ is a topological embedding if
\begin{itemize}
    \item[(a)] $X$ is a compact manifold with boundary, whose distance function is given by (i) Riemannian, (ii) reversibly Finslerian or (iii) gas giant metric, and $S$ is the boundary of $X$

    \item[(b)] $(X,d)$ is a Busemann G-space (such as a closed Riemannian manifold) and $S$ is a closure of a non-empty open set of $X$.

\end{itemize}
Furthermore, $(X,d)$ and the measurement set $S \subset X$ satisfy the the $\flie{\eps}$-property if and only for every $p,q \in X$ such that $d(p,q) < \eps$  
there is a distance minimizing curve from $p$ to $q$ (or from $q$ to $p$) which extends as a distance minimizing curve to some point in $S$.


\end{prop}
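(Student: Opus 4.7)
The plan is to handle three ingredients separately: (i) continuity of $\CR{}$, which holds universally; (ii) injectivity in cases (a) and (b), which yields the topological embedding because $X$ is compact and $C(S)$ is Hausdorff; (iii) the characterization of the $\flie{\eps}$-property, which reduces to saturation of the reverse triangle inequality.

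For (i), the reverse triangle inequality gives $|r_p(z)-r_q(z)|=|d(p,z)-d(q,z)|\le d(p,q)$ for every $z\in S$, so $\CR{}$ is $1$-Lipschitz, hence continuous; a continuous injection from a compact space to a Hausdorff space is automatically a topological embedding. For (a), injectivity for smooth Riemannian manifolds is \cite[Lemma 3.30]{Katchalov2001}, the reversible Finsler version is in \cite{dehoop2020determination}, and the gas giant case is in \cite{de2024geometric}, so I would simply invoke these results.

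The main new content is case (b). I would argue by contradiction: assume $p\neq q$ with $r_p=r_q$, choose $z_0\in\interior(S)$, and pick distance minimizers $\gamma_p,\gamma_q$ from $p,q$ to $z_0$, both of the common length $t:=d(p,z_0)=d(q,z_0)$. Busemann's axioms furnish a radius $\delta>0$ around $z_0$ within which any two points are joined by a \emph{unique} distance minimizing curve; shrinking $\delta$ keeps the $\delta$-ball inside $S$. Using geodesic extendability, I prolong $\gamma_p$ past $z_0$ by length $\delta$ to a point $z\in S$, so that $d(p,z)=t+\delta$; the hypothesis then forces $d(q,z)=t+\delta$, while the triangle inequality $d(q,z_0)+d(z_0,z)=t+\delta$ shows $z_0$ lies on a distance minimizer from $q$ to $z$. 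Concatenating $\gamma_q$ with the unique local minimizer from $z_0$ to $z$ produces such a minimizer. Reversing both extended curves gives two distance minimizers starting at $z$ which agree on the initial segment of length $\delta$ running to $z_0$; the non-branching property of Busemann G-spaces (minimizers that share a positive-length initial piece coincide on their whole overlap) then forces the two reversed curves to coincide past $z_0$, contradicting $p\neq q$.

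Finally, for (iii), once the embedding is established, the $\flie{\eps}$ property reduces to the question of when $\|r_p-r_q\|_\infty=d(p,q)$ is forced by $d(p,q)<\eps$. Since the reverse inequality is automatic, I observe that $\|r_p-r_q\|_\infty=d(p,q)$ holds iff some $z\in S$ saturates $|d(p,z)-d(q,z)|=d(p,q)$, equivalently either $d(p,z)=d(p,q)+d(q,z)$ or $d(q,z)=d(p,q)+d(p,z)$. In a length space, the first identity is equivalent to the existence of a distance minimizer from $p$ to $z$ through $q$, i.e.\ an extension of a minimizer from $p$ to $q$ to the point $z\in S$; the second identity is the symmetric statement. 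This matches the claimed characterization verbatim. The only delicate step in the whole argument is the Busemann case (b), where I must juggle extendability, local uniqueness, and non-branching simultaneously; the Riemannian/Finsler and $\flie{\eps}$ parts amount to quoting known results and manipulating the triangle inequality.
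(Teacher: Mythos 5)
Your treatment of continuity, of part (a) by citation, and of the $\flie{\eps}$-characterization via saturation of the reverse triangle inequality (using compactness of $S$ to attain the supremum, then translating $d(p,z)=d(p,q)+d(q,z)$ into a minimizer through $q$) all match the paper's proof. The problem is in case (b), in the step where you prolong the minimizer $\gamma_p$ from $p$ to $z_0$ past $z_0$ by length $\delta$ to a point $z$ and assert $d(p,z)=t+\delta$. The Busemann axioms give you a \emph{geodesic} prolongation (locally minimizing, and unique), but not a \emph{globally} distance minimizing one from $p$: a minimizer that reaches $z_0$ may stop minimizing exactly at $z_0$. Concretely, on the round sphere $\mathbb{S}^2$ (a Busemann G-space) with $p$ and $z_0$ antipodal, every prolongation of the minimizer past $z_0$ satisfies $d(p,z)<t+\delta$. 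Once that equality fails, $r_p=r_q$ only gives $d(q,z)=d(p,z)<t+\delta$, the identity $d(q,z_0)+d(z_0,z)=t+\delta$ no longer certifies that $z_0$ lies on a minimizer from $q$ to $z$, and the whole non-branching conclusion collapses.

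The paper avoids this by never extending the minimizer: it takes $\gamma$ a minimizer from $x_0\in\interior(S)$ \emph{to} $p$ and picks $x=\gamma(t)$ with $t>0$ small enough that the initial arc $\gamma([0,t])$ stays inside $S$ (possible since $d(x_0,\gamma(s))=s$). Because both $x_0$ and $x$ lie in $S$, the hypothesis $r_p=r_q$ gives $d(q,x_0)=d(x_0,x)+d(x,q)$ directly, so concatenating $\gamma|_{[0,t]}$ with a minimizer from $x$ to $q$ yields a minimizer from $x_0$ to $q$ sharing the initial segment $[0,t]$ with $\gamma$; non-branching then forces $p=q$. Your argument can be repaired by making this same switch — work with an interior sub-segment of the existing minimizer rather than a forward prolongation — after which the rest of your non-branching reasoning goes through.
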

\begin{proof}
The proofs for the first two claims of part (a) can be found in
\cite[Lemma 3.30]{Katchalov2001} for (i)
and
\cite[Proposition 3.1]{de2021determination} for (ii).

Part (iii) was proven in~\cite[Theorem 16]{de2024geometric} when the gas giant manifold is simple.
The proof in the general case is analogous to the Riemannian one in~\cite[Lemma 3.30]{Katchalov2001}.
We only have to establish that any geodesic from any point~$p$ to a boundary point~$z$ that minimizes the distance from~$p$ to~$\partial M$ meets the boundary conormally at~$z$.
On a gas giant every geodesic meets the boundary normally, so we have to be careful to phrase normality on the cotangent side.
Existence of a minimizing geodesic follows from standard metric theory.

Let us then show this conormality.
Without changing the minimizing geodesic
we may assume that the point~$p$ is arbitrarily close to its closest boundary point $z$.
This ensures that the distance to~$p$ is a smooth function in a neighborhood of~$z$ on~$M$ in the smooth structure corresponding to boundary normal coordinates.
The differential of this distance at any point~$q$ is the momentum $\xi_{pq}|_q\in T_q^*M$ of the unique geodesic connecting $p$ to~$q$.
For~$z$ to be a closest boundary point, the differential of the restriction of this distance to $\partial M$ must vanish, requiring $\xi_{pz}|_z\in T_z^*M$ to be conormal to the boundary as claimed.

For part (b) we suppose that $p,q \in X$ are such that $r_p=r_q$. That is 
\[
d(p,x)=d(q,x), \quad \text{for all } x \in S.
\]
Let $x_0 \in S$ be an interior point of $S$, $t_0:=d(p,x_0)$ and let $\gamma\colon [0,t_0] \to X$ be a distance minimizing curve from $x_0$ to $p$ parameterized by the arc length. Since $x_0$ is an interior point of $S$ there is $t\in (0,t_0)$ such that $\gamma([0,t])\subset S$. Hence, for $x:=\gamma(t)$ we have that
\[
d(q,x_0)=d(p,x_0)=d(x_0,x)+d(x,p)=d(x_0,x)+d(x,q).
\]
Thus, if $\alpha\colon[t,t_0] \to X$ is a distance minimizing curve from $x$ to $q$ then the concatenate curve $\tilde \gamma:= \alpha \circ \gamma|_{[0,t]}$ is a shortest curve from $x_0$ to $q$. Since $X$ is a Busemann G-space and the curve $\tilde \gamma$ agrees with the curve $\gamma$ on the interval $[0,t]$ it must hold that these curves are the same.
Thus,
$
p=\gamma(t_0)=q,
$
and we have proved that the travel time map $\CR{} $ is injective. Since this map is continuous and $X$ is compact, $\CR{} $ is also closed. We have proved the first claim for the case (b).

Finally we prove the characterization of the $\flie{\eps}$-property.
Let a closed set $S\subset X$ be such that the map $\CR{} $ is a $\eps$-local isometry. Let $p,q \in X$ be such that $d(p,q)<\eps$. Since the set $S$ is compact there is $z\in S$ such that
\[
|d(p,z)-d(q,z)|=d(p,q).
\]
Thus,
\[
d(p,z)=d(p,q)+d(q,z), \quad \text{or} \quad d(q,z)=d(p,q)+d(p,z).
\]
Since $(X,d)$ is a length space we get from these equations that there is a distance minimizing curve $\sigma$ from $p$ to $z$,  which goes through $q$, or there is distance minimizing curve $\tilde \sigma$ from $q$ to $z$,  which goes through $p$.


Let $x,y \in X$, be such that $d(x,y)<\eps$. Assume that there is a distance minimizing curve, $\sigma \colon [0,t_2]\to X$ such that $\sigma(t_2)=y$, $z:=\gamma(0)\in S$, and for some $t_1 \in [0,t_2]$ we have that $\sigma(t_1)=x$. Thus, by the reverse triangle inequality we have that
\[
d(x,y)=t_2-t_1=d(y,z)-d(x,z)=\sup_{q \in S}|d(y,q)-d(x,q)|=\|\CR{} (y)-\CR{} (x)\|_{\infty}.
\]
We have proved that $\CR{} $ is a $\eps$-local isometry.
\end{proof}

\subsection{Each \texorpdfstring{$\flie{\eps}$}{FLIE}-space is a \texorpdfstring{$\blie{\eps}$}{BLIE}-space after possible choosing a smaller \texorpdfstring{$\eps$}{epsilon}}


In the following proposition we show that if a compact length space with a closed measurement set is in the class $\flie{\eps_0}$ for some $\eps_0>0$ then there is $\eps\in (0,\eps_0]$ such that the the space is also in the $\blie{\eps}$-class. Further, it shows that for a space that is $\blie{\eps}$, then $\CR{}$ is a metric isometry between truncated spaces.

\begin{prop}
\label{prop:(eps,delta)_isometry}
If a compact length space $(X,d)$ with a closed measurement set $S$ is $\flie{\eps_0}$ for some $\eps_0>0$, then there is $\eps\in (0,\eps_0]$ such that the inverse map
\[
\CR{}^{-1}\colon (\CR{}(X),\|\cdot\|_\infty) \to (X,d)
\]
is a $\eps$-local isometry and the map
\[
\CR{}\colon (X,d_{\eps}) \to (C(S),d_{\infty,\eps})
\]
is a metric isometry, where $d_{\eps}$ and $d_{\infty,\eps}$ are the $\eps$-truncations of the metric $d$ and the supremum metric of $C(S)$ respectively.
\end{prop}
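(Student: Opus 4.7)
The plan is to bootstrap the $\flie{\eps_0}$-property together with compactness and injectivity of the topological embedding $\CR{}$ to obtain the $\blie{\eps}$-property for some possibly smaller $\eps$. Recall that $\CR{}$ is always $1$-Lipschitz, so $\|r_p-r_q\|_\infty\le d(p,q)$ for all $p,q\in X$. The $\flie{\eps_0}$-assumption says that equality holds whenever $d(p,q)<\eps_0$. For the $\blie{\eps}$-property we need the same equality whenever $\|r_p-r_q\|_\infty<\eps$, so the content of the proposition is that small distance between the travel time functions forces small distance between the source points.

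The main step, which I expect to be the only real obstacle, is showing the existence of $\eps\in(0,\eps_0]$ such that
\[
\|r_p-r_q\|_\infty<\eps \quad\Longrightarrow\quad d(p,q)<\eps_0,
\]
for all $p,q\in X$. I would argue by contradiction via a compactness argument. Suppose no such $\eps$ exists; then there are sequences $(p_n),(q_n)\subset X$ with $\|r_{p_n}-r_{q_n}\|_\infty<1/n$ but $d(p_n,q_n)\ge\eps_0$. By compactness of $X$, after passing to subsequences, $p_n\to p$ and $q_n\to q$ with $d(p,q)\ge\eps_0$. Continuity of $\CR{}$ gives $r_{p_n}\to r_p$ and $r_{q_n}\to r_q$ in $C(S)$, so $\|r_p-r_q\|_\infty=0$, i.e.\ $r_p=r_q$. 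Injectivity of the topological embedding $\CR{}$ forces $p=q$, contradicting $d(p,q)\ge\eps_0$. Replacing $\eps$ with $\min(\eps,\eps_0)$ if necessary, I may assume $\eps\le\eps_0$.

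Having this $\eps$, the first claim is immediate: if $\|r_p-r_q\|_\infty<\eps$, then by the choice of $\eps$ we have $d(p,q)<\eps_0$, and the $\flie{\eps_0}$-property yields $d(p,q)=\|r_p-r_q\|_\infty$, which is the $\eps$-local isometry property of $\CR{}^{-1}$.

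For the second claim I verify that $d_\eps(p,q)=d_{\infty,\eps}(r_p,r_q)$ for all $p,q\in X$ by splitting into two cases. If $d(p,q)<\eps$, then $d(p,q)<\eps_0$, so the $\flie{\eps_0}$-property gives $\|r_p-r_q\|_\infty=d(p,q)<\eps$; hence both truncated distances equal $d(p,q)$. If instead $d(p,q)\ge\eps$, then $d_\eps(p,q)=\eps$, and it remains to show $\|r_p-r_q\|_\infty\ge\eps$. Suppose otherwise; then $\|r_p-r_q\|_\infty<\eps$, and the first claim gives $d(p,q)=\|r_p-r_q\|_\infty<\eps$, a contradiction. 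Thus $d_{\infty,\eps}(r_p,r_q)=\eps=d_\eps(p,q)$, and $\CR{}$ is a metric isometry between the $\eps$-truncated spaces.
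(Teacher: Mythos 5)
Your proof is correct. The truncation part at the end is essentially identical to the paper's, but your treatment of the key step --- producing the uniform $\eps$ --- takes a genuinely different route. The paper covers $\CRR{X,S}(X)$ by the images $\CRR{X,S}\bigl(B(\CRR{X,S}^{-1}(y),\tfrac{\eps_0}{2})\bigr)$ of metric balls of radius $\eps_0/2$, invokes Lebesgue's number lemma to find an $\eps$ so that every $\tfrac{\eps}{2}$-ball in the data space sits inside one such image, and then notes that $\CRR{X,S}^{-1}$ restricted to each image is an isometry by the $\flie{\eps_0}$-property. You instead isolate the clean quantitative statement $\|r_p-r_q\|_\infty<\eps\implies d(p,q)<\eps_0$ (in effect, uniform continuity of $\CRR{X,S}^{-1}$ at scale $\eps_0$) and prove it by sequential compactness: a failing sequence would converge to a pair $p\neq q$ with $r_p=r_q$, contradicting injectivity of the topological embedding. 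Your version is more elementary --- it avoids the slightly awkward step of extending the relatively open sets $\CRR{X,S}(B(\cdot,\eps_0/2))$ to open sets $V_y$ of the ambient space $C(S)$ before applying the Lebesgue number lemma --- and it makes explicit exactly which implication is needed before the $\flie{\eps_0}$-property takes over. The only cost is that it is non-constructive in the same way the paper's argument is; both yield no control on $\eps$ beyond existence. The remaining steps (deducing the $\eps$-local isometry of the inverse, and the two-case verification that the truncated distances agree) match the paper's reasoning.
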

\begin{proof}
Due to our assumptions $\CR{}\colon (X,g) \to (C(S), \|\cdot\|_\infty)$ is a topological embedding, and since $X$ is compact the image set $\CR{}(X)\subset C(S)$ is compact.
We note that the collection 
\[
\Big\{\CR{}\Big(B\Big(\CR{}^{-1}(y),\frac{\eps_0}{2}\Big)\Big)\subset \CR{}(X): \: y \in \CR{}(X)\Big\}
\]
is an open cover of $\CR{}(X)$ with respect to the subspace topology of $\CR{}(X)$. Here, $B(x,r)$ is a metric ball of $(X,d)$ centered at $x \in X$ of radius $r>0$.  Thus, for every $y \in \CR{}(X)$ there is an open set $V_y \subset C(S)$ such that
\[
V_y\cap \CR{}(X)=\CR{}\Big(B\Big(\CR{}^{-1}(y),\frac{\eps_0}{2}\Big)\Big).
\]
Since $\CR{}(X)$ is a compact set with an open cover $\{V_y\}_{y\in \CR{}(X)}$ from the metric space $(C(\p M), \|\cdot\|_{\infty})$ it follows from the Lebesgue's number lemma that there is $\eps\in (0,\eps_0]$ such that for every $y \in \CR{}(X)$ there exists $z\in \CR{}(X)$ such that
\[
B_{\infty}\Big(y,\frac{\eps}{2}\Big)\subset V_{z}.
\]
In above, the set $B_\infty(y,\frac{\eps}{2})$ is a metric ball of $(C(S), \|\cdot\|_{\infty})$. By the definition of $\flie{\eps_0}$, the map $\CR{}$ is a $\eps_0$-local isometry. Therefore, the inverse map 
$\CR{}^{-1}\colon (\CR{}(X),\|\cdot\|_{\infty})\to (X,d)$ is an isometry on the set $\CR{}(B(\CR{}^{-1}(z),\frac{\eps_0}{2}))$, which contains the induced ball
$
B_\infty(y,\frac{\eps}{2}) \cap \CR{}(X).
$
This means that $\CR{}^{-1}\colon (\CR{}(X),\|\cdot\|_{\infty})\to (X,d)$ is a $\eps$-local isometry.

Finally, we choose any $p,q \in X$, and aim to show that $d_{\eps}(p,q)=d_{\infty,\eps}(\CR{}(p),\CR{}(q))$. First note that $d_\infty(\CR{}(p),\CR{}(q)) \leq d(p,q)$ since $\CR{}$ is always a $1$-Lipschitz function by the reverse triangle inequality. We also have that
\[
    d_{\infty,\eps}(\CR{}(p),\CR{}(q))\leq d_\eps(p,q) \leq \eps
\]
If $d_{\infty}(\CR{}(p),\CR{}(q)) \geq \eps$ then $d_{\infty,\eps}(\CR{}(p),\CR{}(q)) = d_\eps(p,q)$. If $d_\infty(\CR{}(p),\CR{}(q)) < \eps$, then since $\CR{}^{-1}$ is an $\eps$-local isometry we have that $d_\infty(\CR{}(p),\CR{}(q)) = d(p,q) < \eps$ and we find that $d_{\infty,\eps}(\CR{}(p),\CR{}(q)) = d_\eps(p,q)$ as desired.
\end{proof}






\subsection{Examples of sufficient or necessary conditions for \texorpdfstring{$\flie{\eps}$}{BLIE} spaces}
In this section we are focusing on Riemannian manifolds  with a closed measurement set and aim to extract some geometric properties of the $\flie{\eps}$-manifolds. The role of the two subsequent lemmas is to provide a partial characterization of $\flie{\eps}$-manifolds.


The following lemma provides a necessary condition for the $\flie{\eps}$ property.

\begin{lem}
\label{lem:lenght_of_geos}
Let $(M,g)$ be a smooth compact Riemannian manifold 
\begin{itemize}
    \item[(a)] with a smooth boundary $S$, or

    \item[(b)] without a boundary but having having a closed measurement set $S \subset M$.
\end{itemize}
If $(M,g,S)$ satisfies the  $\flie{\eps}$-property for some $\eps>0$ then the set $M\setminus S$ contains no geodesics that are longer than 
$2\diam(M)$.
\end{lem}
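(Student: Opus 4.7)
I argue by contradiction. Suppose there exists a geodesic $\gamma\colon[0,L]\to M\setminus S$, parameterized by arc length, with $L>2\diam(M)$. I first pick $\delta\in(0,\eps/2)$ small enough that $\gamma|_{[L/2-\delta,\,L/2+\delta]}$ is the unique distance minimizing curve in $(M,d)$ between its endpoints $p_1:=\gamma(L/2-\delta)$ and $p_2:=\gamma(L/2+\delta)$. This is possible because in a smooth Riemannian manifold sufficiently short segments of interior geodesics are length minimizing and are the unique shortest curves between their endpoints within a convex neighborhood. In particular $d(p_1,p_2)=2\delta<\eps$.

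Next I apply the characterization of the $\flie{\eps}$-property from Proposition~\ref{thm:local_isometry}: there exists a distance minimizing curve $\sigma$ from $p_1$ to $p_2$ or from $p_2$ to $p_1$ which extends, as a distance minimizing curve, to some $z\in S$. By the uniqueness noted above, the middle segment of $\sigma$ between $p_1$ and $p_2$ must coincide with $\gamma|_{[L/2-\delta,\,L/2+\delta]}$. On the side where $\sigma$ extends beyond this segment, $\sigma$ is locally length minimizing in the interior of $M$, hence a Riemannian geodesic there; by uniqueness of geodesics from initial position and velocity, this extension lies on the maximal geodesic extension $\tilde\gamma$ of $\gamma$. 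Writing $\tilde\gamma(t^*)=z$, we therefore have either $\sigma=\tilde\gamma|_{[L/2-\delta,\,t^*]}$ with $t^*>L/2+\delta$ (extension past $p_2$), or $\sigma=\tilde\gamma|_{[t^*,\,L/2+\delta]}$ with $t^*<L/2-\delta$ (extension past $p_1$).

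Finally I read off the contradiction from the length of $\sigma$. Since $\gamma([0,L])\subset M\setminus S$, the parameter $t^*$ cannot lie in $[0,L]$, so $t^*>L$ in the forward case and $t^*<0$ in the backward case. Either way, using that $\tilde\gamma$ is arc-length parameterized,
\[
\mathrm{length}(\sigma)\;>\;L/2\;>\;\diam(M),
\]
where the last inequality uses $L>2\diam(M)$. This contradicts the fact that $\sigma$ is a distance minimizing curve between two points of $M$, which forces $\mathrm{length}(\sigma)\leq\diam(M)$.

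The main obstacle I anticipate is rigorously identifying $\sigma$ with a segment of $\tilde\gamma$. In the boundary case (a) one should rule out that the distance minimizing curve $\sigma$ leaves $\tilde\gamma$ by grazing or running along $\partial M$; but because $\sigma$ begins in the interior along the unique Riemannian geodesic extending $\gamma$, it coincides with $\tilde\gamma$ up to the first time $\tilde\gamma$ meets $\partial M$, and distance minimization then forces that first boundary hit to be precisely~$z$, so $\sigma\subset\tilde\gamma$ as claimed.
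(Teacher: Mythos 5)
Your proof is correct and rests on exactly the same two ingredients as the paper's: the geometric characterization of the $\flie{\eps}$-property from Proposition~\ref{thm:local_isometry} applied to a short sub-segment of the geodesic, and the observation that a distance minimizing curve can never be longer than $\diam(M)$. The only difference is organizational — you run a single contradiction from the midpoint of a hypothetical over-long segment, while the paper first anchors one endpoint on $S$ and then applies the characterization a second time at parameter $\diam(M)$ to bound the return time by $2\diam(M)$; both yield the same bound.
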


\begin{proof}
For a point $p\in M \setminus S$ we choose any unit vector $v \in S_pM$ and consider the respective maximal geodesic $\gamma=\gamma_{p,v}$ of $(M,g)$. Then, we choose $0<r<\eps$ such that the geodesic $\gamma\colon [0,r] \to M \setminus S$ is the unique distance minimizing curve between $p$ and $q:=\gamma(r)$. Thus, $d(p,q)=r<\eps$ and by the $\flie{\eps}$-property there is $t_0>0$ such that $\gamma(t_0)$ or $\gamma(-t_0)$ is in the set $S$. In particular the curve $\gamma$ meets the set $S$ in a finite time to at least one direction. 

After a reparametrization we may assume that $\gamma(0)$ is in $S$, $p=\gamma(t_0)$ for some $t_0>0$ and $\gamma((0,t_0])\subset M\setminus S$. 
Let $T:=\diam(M)$.
If $\gamma(t) \in S$ for some $t \in (t_0,T],$  we are done. Thus, without loss of generality we may suppose that there is $T_0>T$ such that $\gamma((0,T_0)) \subset M \setminus S$. Then, we consider a point $\tilde p:=\gamma(T)$, and let $0< r<\min\{\eps,T_0-T\}$ be so small that $\gamma$ is the unique distance minimizing curve between $\tilde p$ and $\tilde q=\gamma(T+r)$. Then $d(\tilde p,  \tilde q)=r<\eps$ and by the $\flie{\eps}$-property, and the assumption $\gamma((0,T_0))$ is contained in $M\setminus S$, we must have that  $\gamma|_{[0,T+r]}$ is a distance minimizing unit speed curve, or there is $T_1\geq T$ such that $\gamma(T_1) \in S$ and $\gamma|_{[T,T_1]}$ is a distance minimizing curve. 
The first case is impossible since $T+r$ is larger than the diameter of $M$. Thus, the second case is valid and it implies that $T_1\leq 2T$. Therefore, no segment of the geodesic $\gamma$ which lies in $M\setminus S$ is longer than
$
2\diam(M)
$.

Since $\gamma$ is a generic geodesic partially contained in $M\setminus S$ we have proved the claim.
\end{proof}

\begin{rem}
    If $S\subset \bS$ is a closure of an open set such that $\bS \setminus S$
    contains a great circle, then by Lemma \ref{lem:lenght_of_geos}  the manifold $\bS$ with a measurement set $S$ does not satisfy the $\flie{\eps}$-property for any $\eps>0$. The same is true for a cylinder $\mathbb S\times [0,1]$ if the measurement set is the boundary $(\bS \times \{0\})\cup (\bS \times \{1\})$. 
\end{rem}

Let $(N,g)$ be a complete Riemannian manifold and $M\subset N$ an open bounded set with a smooth boundary. We say that the set $\overline{M}$ is \textit{geodesically convex} if for each pair of points of $\overline{M}$ there exists an $N$-distance minimizing curves between these points contained in $\overline{M}$. We would like to emphasize that this type of convexity does not imply that manifold $(M,g)$ would be a simple. For instance, if we consider the paraboloid 
\[
N=\{(\cos(v)u,\sin(v)u,u^2) \in \R^3: \: v\in [0,2\pi], \: u \in \R \},
\] 
and $T>0$ then the parabolic frustum
$
M:= N \cap \{x \in \R^3: \: x_3<  T\}
$
has a strictly convex boundary $\p M =\{x \in N: \: x_3=T\}$ and $\overline{M}$ is geodesically convex in the previous sense. Also, for $T$ large enough the manifold $M$ is not simple since it has self-intersecting geodesics.

After these preparations we are ready to establish a sufficient condition for the $\flie{\eps}$-property of the Riemannian manifold $(\overline{M},g)$, when the measurement set $S$ is the boundary of $M$. This can be seen as the partial converse of Lemma \ref{lem:lenght_of_geos}.


\begin{lem}
\label{lem:converse_lenght_of_geos}
    Let $(N,g)$ be a complete  Riemannian manifold with an injectivity radius 
    $i(N,g)>0$, and $M\subset N$ an open bounded set with a smooth boundary. 
    Suppose that $\overline M$ is geodesically convex. If there is a number $L \in (0, 2i(N,g))$ such that for any geodesics of $(N,g)$ the length of no connected component of this geodesic contained in $M$ exceeds the number $L$ then $(M,g)$ with the measurement set $\p M$ satisfies the $\flie{\eps}$-property for every $\eps \in (0, i(N,g)-\frac{L}{2})$.  
\end{lem}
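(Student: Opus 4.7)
The plan is to apply the equivalent characterization of the $\flie{\eps}$-property furnished by Proposition~\ref{thm:local_isometry}: it suffices to prove that for every $p,q\in\overline M$ with $d_M(p,q)<\eps$, there is a distance-minimizing curve between $p$ and $q$ which extends as a distance-minimizing curve to some point of $\partial M$. If either $p$ or $q$ already lies on $\partial M$ the curve itself realizes the extension, so I may assume both $p$ and $q$ belong to the open interior $M$.

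Since $\overline M$ is geodesically convex one has $d_M(p,q)=d_N(p,q)=:r$, and the bound $r<\eps<i(N,g)$ produces a unique $N$-minimizing unit-speed geodesic $\gamma\colon[0,r]\to\overline M$ joining $p$ to $q$. By completeness of $(N,g)$ I extend it to a maximal unit-speed geodesic $\gamma\colon\R\to N$ with $\gamma(0)=p$ and $\gamma(r)=q$. Set
\[
\alpha:=\sup\{t<0:\gamma(t)\in\partial M\}\qquad\text{and}\qquad\beta:=\inf\{t>r:\gamma(t)\in\partial M\}.
\]
Compactness of $\overline M$ together with the unit speed of $\gamma$ forces both to be finite, with $\gamma(\alpha),\gamma(\beta)\in\partial M$ and $\gamma((\alpha,0))\cup\gamma((r,\beta))\subset M$. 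In the generic situation $\gamma((0,r))\subset M$, the interval $(\alpha,\beta)$ is a single connected component of $\gamma^{-1}(M)$, so the hypothesis on the lengths of such components yields $\beta-\alpha\leq L$.

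The heart of the argument is then the averaging estimate
\[
\min\{\beta,\,r-\alpha\}\leq\frac{\beta+(r-\alpha)}{2}=\frac{r+(\beta-\alpha)}{2}\leq\frac{r+L}{2}<\frac{\eps+L}{2},
\]
combined with the elementary inequality $(\eps+L)/2<i(N,g)$, which follows from $\eps<i(N,g)-L/2$ together with $L/2<i(N,g)$. In the subcase $\beta\leq r-\alpha$, the arc $\gamma|_{[0,\beta]}\subset\overline M$ has length strictly below the global injectivity radius, so it is an $N$-distance minimizing curve from $p$ to $\gamma(\beta)\in\partial M$; by geodesic convexity it is also $(M,d)$-distance minimizing, and it passes through $q=\gamma(r)$, which is precisely the extension required by the characterization. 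The symmetric subcase $r-\alpha<\beta$ is handled identically using the reverse of $\gamma|_{[\alpha,r]}$ to obtain a minimizing curve from $q$ through $p$ to $\gamma(\alpha)\in\partial M$.

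The main technical obstacle is the non-generic scenario where $\gamma$ is tangent to $\partial M$ at some interior time $c\in(0,r)$; then $(\alpha,\beta)$ splits into two separate components of $\gamma^{-1}(M)$ and the naive averaging gives only $\min\{\beta,\,r-\alpha\}\leq r/2+L$, which a priori can exceed $i(N,g)$. Handling this configuration cleanly requires either a component-wise refinement of the averaging argument or an additional convexity property of $\partial M$ ruling out tangential contacts of $N$-geodesics from inside $\overline M$.
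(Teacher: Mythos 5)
Your argument is essentially the paper's own proof in different clothing: the paper parametrizes the boundary-to-boundary extension of the minimizing geodesic as $\gamma\colon[0,L(\gamma)]\to\overline M$ with $p=\gamma(t)$, $q=\gamma(s)$, and splits into three cases according to the position of $t,s$ relative to $L(\gamma)/2$, each time exhibiting a sub-segment of length below $i(N,g)$; your single averaging inequality $\min\{\beta,\,r-\alpha\}\le\tfrac{1}{2}\bigl(r+(\beta-\alpha)\bigr)\le\tfrac{1}{2}(\eps+L)<i(N,g)$ packages all three cases at once and is, if anything, cleaner. Two remarks. First, a small slip: finiteness of $\alpha$ and $\beta$ does not follow from compactness of $\overline M$ (a geodesic can be trapped in a compact set); it follows from the hypothesis that no connected component of $\gamma^{-1}(M)$ has length exceeding $L$, which forbids an unbounded component. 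Second, and more substantively, the ``tangency'' scenario you flag at the end is \emph{not} resolved in the paper either: the paper's proof asserts outright that the boundary-to-boundary extension has length $L(\gamma)\le L$, which uses the component-length hypothesis and therefore presupposes that $\gamma((0,L(\gamma)))$ lies in a single component of $\gamma^{-1}(M)$ --- exactly your ``generic situation.'' So you have not omitted a step the paper supplies; you have made explicit an assumption the paper leaves silent. (Note that when $N=\R^n$ geodesic convexity of $\overline M$ is ordinary convexity and a segment between interior points cannot meet $\partial M$, and one expects geodesic convexity together with smoothness of $\partial M$ to exclude interior tangencies in general, but neither your write-up nor the paper proves this.) As written your proof is therefore complete exactly to the same extent as the published one.
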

\begin{proof}
Suppose that the condition of lemma is valid and take any $\eps \in (0, i(N,g)-\frac{L}{2})$. 
Let $p,q \in \overline M$ be such that $d(p,q)<\eps$. Since  $\overline M$ is convex there exists a unique unit speed  geodesic $\gamma$ of $(N,g)$ connecting $p$ to $q$ with length $L(\gamma)\leq L$. Furthermore, there are numbers $0\leq t\leq s\leq L(\gamma)$ such that
\[
\gamma(t)=p, \:  \gamma(s)=q, \: s-t=d(p,q), \text{ and } \gamma(0),\gamma(L(\gamma))\in \p M.
\]
If $t,s\leq \frac{L(\gamma)}{2}$ or $t,s\geq \frac{L(\gamma)}{2}$ we have from the assumption  $L(\gamma)<2i(M,g)$ that one of the geodesic segments $\gamma|_{[0,s]}$ or $\gamma|_{[t,L(\gamma)]}$ is shorter than $i(M,g)$. Thus, at least one of these segments satisfies the $\flie{\eps}$-definition.

If $t<\frac{L(\gamma)}{2}<s$ then 
\[
s=t+(s-t)<\frac{L(\gamma)}{2}+\eps < \frac{L}{2}+i(N,g)-\frac{L}{2}=i(N,g).
\]
Thus, the geodesic segment $\gamma|_{[0,s]}$ satisfies the $\flie{\eps}$-definition.
\end{proof}

We provide some concrete examples of $\flie{\eps}$-spaces in the Appendix \ref{Sec:Appendix}.

\subsection{When is the travel time map a global isometry?}

In the main result of this subsection, Proposition \ref{prop:global_isometry}, we first derive three equivalent conditions for the travel time map being a global isometry on a compact length space with a closed measurement set. Then we turn our attention to compact Riemannian manifolds with boundary, and study two mutually exclusive boundary assumptions: 
\begin{itemize}
    \item[(i)] the boundary is strictly convex, 
    \item[(ii)] the boundary is \textit{totally geodesic}. 
\end{itemize}
The first case means that the second fundamental form of the boundary is positive definite. A submanifold is called totally geodesic if any geodesic on the submanifold, with its induced Riemannian metric, is also a geodesic of the ambient manifold. When the measurement set is the boundary, satisfying the assumption (i) or (ii), we provide in Proposition \ref{prop:global_isometry} two additional conditions which are also equivalent to the isometry property of the travel time map. The main observation of this subsection is that a compact Riemannian manifold with a strictly convex boundary and an isometric travel time mapping needs to be essentially simple. The converse result was observed in \cite[Proposition 6]{ilmavirta2023three}.

\begin{prop} \label{prop:global_isometry}
Let $(X,d)$ be a compact length space with a closed measurement set $S$. The following conditions are equivalent:
    \begin{enumerate}
        \item[(A)] $\CR{}\colon (X,d) \to (C(S), \|\cdot\|_\infty)$ is a metric isometry.
        
        \item[(B)] For all $p,q\in X$ there exist $z \in S$ and a distance minimizing curve $\sigma:[0,L]\to X$ from $p$ to $z$ that goes through $q$ (or from $q$ to $z$ via $p$).
        
        \item[(C)] Any distance minimizing curve between two points of $X\setminus S$ has a distance minimizing extension forwards or backwards.
    \end{enumerate}
    Moreover, if $(X,d) = (M,g)$ is a compact Riemannian manifold with nonempty boundary $\partial M=S$ which is strictly convex or totally geodesic, then the following two properties are also equivalent with the former three conditions:
    \begin{enumerate}
        \item[(D)] Every pair of interior points of $M$ is connected by a unique geodesic which is distance minimizing all the way to the boundary (not necessarily both ways).
        \item[(E)] The Riemannian manifold $(M,g)$ 
            \begin{itemize}
                \item[(i)] is non-trapping,
                \item[(ii)] has no interior conjugate points, in the sense that for no pair of interior points is there a geodesics connecting these points along which the points are conjugate to each other, and
                \item[(iii)] each pair of its interior points are connected by a unique distance minimizing geodesic whose trace is contained in the interior.
        \end{itemize}
    \end{enumerate}
\end{prop}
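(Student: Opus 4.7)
The plan is to split the argument into two stages: the equivalence (A) $\Leftrightarrow$ (B) $\Leftrightarrow$ (C) for general compact length spaces, and then (D) and (E) under the Riemannian hypothesis. For the abstract part, the strategy is to globalize the $\flie{\eps}$ characterization already established in Proposition~\ref{thm:local_isometry}. For (A) $\Rightarrow$ (B), the identity $\|r_p-r_q\|_\infty=d(p,q)$ together with compactness of $S$ produces a witness $z\in S$ where $|d(p,z)-d(q,z)|=d(p,q)$, which (after possibly swapping the roles of $p$ and $q$) gives the triangle equality $d(p,z)=d(p,q)+d(q,z)$; the length-space axiom then allows concatenating distance-minimizing curves $p\to q$ and $q\to z$ (existing by compactness) into the curve required by (B). The reverse (B) $\Rightarrow$ (A) is immediate, since the curve in (B) forces the supremum to match the reverse-triangle bound. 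For (B) $\Leftrightarrow$ (C), gluing the tail of the curve supplied by (B) onto any distance-minimizing arc between two interior points gives the extension in (C), and conversely (C) iterated until the boundary yields (B).

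In the Riemannian setting the plan is to prove the cycle (E) $\Rightarrow$ (D) $\Rightarrow$ (B) and separately (C) $\Rightarrow$ (D) $\Rightarrow$ (E). For (E) $\Rightarrow$ (D), use (iii) to pick the unique interior distance-minimizing geodesic $\sigma$ between two interior points, (i) to continue $\sigma$ as a geodesic until it hits $\partial M$, and (ii)+(iii) together with the classical cut-locus dichotomy (a cut point is either a conjugate point or a branching point of two minimizers) to rule out any interior cut point, so that $\sigma$ remains distance-minimizing all the way to $\partial M$. For (D) $\Rightarrow$ (E), clause (iii) reads off the uniqueness in (D); for (i), identify a supposedly trapped geodesic locally with a short distance-minimizing arc and use uniqueness of Riemannian geodesics with prescribed initial data to force agreement with the boundary-reaching extension from (D); for (ii), argue by contradiction that a conjugate pair $(p,q)$ along some geodesic $\gamma$ forces either $\gamma$ not to extend as a distance-minimizer past $q$ (contradicting the extension clause in (D) when $\gamma$ between $p$ and $q$ is the unique minimizer), or the existence of a second minimizer contradicting uniqueness. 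Finally (D) $\Rightarrow$ (B) is immediate, while (C) $\Rightarrow$ (D) uses (C) iteratively together with the strictly convex or totally geodesic boundary geometry to furnish the extending geodesic, with uniqueness derived from (A).

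The main obstacle is the uniqueness clause in (D). Existence of the extension is handed to us directly by (C), but ruling out two distinct distance-minimizing geodesics between the same interior pair requires the full isometry property (A) combined with the boundary geometry: in the strictly convex case, two extensions would produce two distinct transversal exits of $\partial M$, whereas in the totally geodesic case one must separately handle the possibility that the extension runs along $\partial M$ itself. A secondary delicate point is the conjugate-point analysis in (D) $\Rightarrow$ (E), since spheres show that interior points can be conjugate along distance-minimizing geodesics; here the extension-to-boundary guaranteed by (D), together with the classical failure of minimization strictly past a conjugate point, is what delivers the contradiction.
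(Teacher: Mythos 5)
Your proposal follows essentially the same route as the paper. For the abstract part the paper does exactly what you describe: the compactness witness $z\in S$ and the triangle equality for (A)$\Rightarrow$(B), the reverse triangle inequality plus $1$-Lipschitzness for (B)$\Rightarrow$(A), and concatenation of tails for (B)$\Rightarrow$(C). In the Riemannian part you route the implications through (D) rather than hubbing everything at (A), but the ingredients coincide: existence of the boundary-reaching minimizer from the isometry, uniqueness via the boundary geometry, the cut-point dichotomy (Klingenberg's lemma) for (E)$\Rightarrow$(D), and the failure of minimization past a conjugate point for the converse. One cosmetic difference: in the strictly convex case the paper derives the contradiction for uniqueness by splicing the two minimizers into a single distance-minimizing geodesic with a corner at $q$ (violating smoothness of geodesics), rather than from ``two distinct transversal exits,'' which is not by itself contradictory; your totally geodesic caveat matches the paper's use of Alexander's regularity results to reduce to the same corner argument.

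Two steps in your sketch need sharpening, both in ways the paper supplies. First, for (C)$\Rightarrow$(B), ``iterate until the boundary'' does not obviously terminate, since successive extensions could add vanishingly small lengths; the paper instead takes a maximal distance-minimizing extension via Zorn's lemma and observes that maximality forces an endpoint into $S$. Second, and more substantively, your non-trapping argument in (D)$\Rightarrow$(E) would fail as stated for a geodesic that exits the manifold backward but is trapped forward: the unique boundary-reaching extension guaranteed by (D) could simply be the backward one, producing no contradiction. The fix is the diameter-counting argument of the paper's Lemma~\ref{lem:lenght_of_geos}: slide the test pair more than $\diam(M)$ along the trapped direction, so that the backward extension is too long to be distance minimizing and the forward extension must reach the boundary within another $\diam(M)$. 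With these two repairs your outline matches the paper's proof.
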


\begin{proof}
        (A)$\implies$(B): By assumption for all $p,q\in X$ we have that 
        $$
        \|\CR{}(p) - \CR{}(q)\|_\infty  = \sup_{z\in S} |d(p,z) - d(q,z)| = d(p,q).
        $$
        By compactness of the set $S$ there exists a point $z\in S$ such that either $d(p,z) = d(p,q) + d(q,z)$ or we have that $d(q,z) = d(q,p) + d(p,z)$. Without loss of generality we assume that the first case is true. Since $X$ is a length space the points $p$ and $q$ and $q$ and $z$ can be connected by distance minimizing curves. Thus, by the former equality, the composition of these curves is a distance minimizing curve from $p$ to $z$ going through $q$.
        
        
        (B)$\implies$(A):  Let $p,q \in X$. Let $z \in S$ and $\sigma:[0,L]\to X$ be a distance minimizing curve  from $p$ to $z$ that goes through $q$. Then 
        $$
        d(p,q) = |d(p,z) - d(q,z)| \leq \sup_{x\in S} |d(p,x) - d(q,x)| = \|\CR{}(p) - \CR{}(q)\|_\infty
        $$
        and since $\CR{}$ is $1$-Lipschitz we have that $d(p,q) = \|\CR{}(p) - \CR{}(q)\|_\infty$. Thus $\CR{}$ is a metric isometry.
        

        
        (B)$\implies$(C):  Let $p,q\in X\backslash S$. Let $\ell=d(p,q)$ and $\sigma:[0,\ell]\to X$ be a distance minimizing curve connecting $p$ to $q$.  By the hypothesis we may take $z \in S$ and a distance minimizing curve $\tilde{\sigma}:[0,L]\to X$, parameterized by the arc length, which goes from $p$ to $z$ via $q$. Thus, the concatenation of $
        \tilde \sigma|_{[\ell,L]}$ and $\sigma
        $ is a distance minimizing extension of $\sigma$.


        (C)$\implies$(B): %
        Let $p,q\in X\backslash S$ and let $\sigma:[0,\ell]\to X$ be a distance minimizing curve connecting $p$ to $q$. We use Zorn's lemma to show that this curve has a maximal distance minimizing extension. For this we let $E$ to be the set of all distance minimizing extensions of $\sigma$, parameterized by their arc length. On the collection $E$ we introduce a partial ordering: if the curves $\sigma_1$ and $\sigma_2$ are in $E$, we say that $\sigma_1 \leq \sigma_2$ if 
        $\sigma_2$ is a distance minimizing extension of $\sigma_1$. 
        Let $C \subset E$ be a chain with respect to our partial ordering, that is if $\sigma_1,\sigma_2 \in C$ then one of them is an extension of the other. To apply Zorn's Lemma we need to show that $C$ has an upper bound in $E$. We omit the rest of the details as they are standard results in a compact length space.

        Let $\tilde\sigma \colon [a,b]\to X$ be any maximal distance minimizing extension of $\sigma \colon [0,\ell]\to X$. If both $\tilde\sigma(a)$ and $\tilde\sigma(b)$ are contained in $X\backslash S$, then by condition (B) there must exist a distance minimizing extension of $\tilde\sigma$. But $\tilde\sigma$ was assumed to be a maximal distance minimizing extension of $\sigma$, this is a contradiction. We conclude that $\tilde\sigma(a)$ or $\tilde\sigma(b)$ must be in $S$ and finish our proof that (C) implies (B).


        We have proved that (A), (B) and (C) are equivalent. From here onwards we assume that $(X,d)=(M,g)$ is a smooth Riemannian manifold with boundary $\p M=S$, which is either strictly convex or totally geodesic. We show that (A) and (D) are equivalent.
        
        (D)$\implies$(A): 
        Clearly with the choice $S=\p M$ the condition (D) implies (B) and (A). 
        
        (A)$\implies$(D): Let $p,q\in M$ be interior points. As $\CR{M,\p M}$ is a metric isometry, then there exists a distance minimizing curve $\sigma$ starting from $p$, going through $q$ and ending at some boundary point $z$. For the sake of contradiction assume that $\tilde{\sigma}$ is another distance minimizing curve from $p$ to $q$ ending at a boundary point $\tilde{z}$. We consider the case when $\p M$ is strictly convex and totally geodesic separately.
          
          
We assume that the boundary is strictly convex, then the curves $\sigma$ and $\tilde \sigma$ are both geodesics of $M$ and they are contained in the interior of $M$ modulo the end points. As they both go through $p$ and $q$ while being different, we can use these curves to construct a distance minimizing curve (also a geodesic) from $q$ to $z$ which is not $C^1$ at some point. This is a contradiction since geodesics are always smooth curves.



Then we assume that the boundary is totally geodesic. It was shown in \cite[Chapter 4, Proposition 13]{o1983semi} that the following three conditions are equivalent:
\begin{itemize}
    \item a submanifold is totally geodesic,
    \item the second fundamental form of the submanifold vanishes identically,
    \item each geodesic of the ambient manifold which is tangential to the submanifold stays in the submanifold for a short time. 
\end{itemize}
It follows from \cite{alexander1981geodesics} that the distance minimizing curves $\sigma$ and $\tilde \sigma$ are $C^1$-smooth, have only tangential intersections with the boundary (in the interior of their domain) and outside the boundary these curves are geodesic of $(M,g)$. As the points $p$ and $q$ lie in the interior we get from the total geodesicity of the boundary that the curves $\sigma$ and $\tilde \sigma$ can only have a transversal intersection with boundary, and this must happen at the terminal points. Therefore curves $\sigma$ and $\tilde \sigma$ are geodesics of $(M,g)$, which are contained in the interior of $M$ modulo the end points. Thus, we arrive in a contradiction by the same argument as in the previous part.


Finally, we show that (A) and (E) are equivalent. 

(A)$\implies$(E): If (A) is true then also (E) is true. Due to Lemma \ref{lem:lenght_of_geos} it holds that $(M,g)$ is non-trapping. Let $p,q$ be some interior points, and choose the unique distance minimizing geodesic between $p$ and $q$ which ends at the boundary. By \cite[Proposition 10.32]{lee2018introduction} no geodesic is minimizing beyond the first conjugate point. Therefore, the points $p$ and $q$ cannot be conjugate along $\gamma$. We have verified the conditions conditions (i) -- (iii) of property (E).

(E)$\implies$(A):  Let $p,q \in M$ be some interior points. Then there is a unique geodesic $\gamma\colon [0,L] \to M$ such that $\gamma(L) \in \p M$, $\gamma((0,L))\subset M^\interior$, $\gamma(0)=q$ and $\gamma(t_0)=p$ for $t_0=d(p,q) \in (0,L)$. We claim that $\gamma$ is a distance minimizer in the segment $[0,L]$. This implies condition (A).

We define 
\[
t_\star:=\sup\{t\in [0,L]: \: d(\gamma(0),\gamma(t))=t\}.
\]
Clearly $t_\star \in [t_0,L]$. If $t_\star< L$ then we get from the boundary assumptions and Klingenbergs lemma \cite[10.32]{lee2018introduction} (see also \cite[Lemma 3.2]{pavlechko2022uniqueness}) that at least one of the following conditions is true:
\begin{itemize}
    \item The points $\gamma(0)$ and $\gamma(t_\star)$ are conjugate along $\gamma$,
    \item There is another distance minimizing geodesic from $\gamma(0)$ to $\gamma(t_\star)$.
\end{itemize}
Since $q=\gamma(0)$ and $\gamma(t_\star)$ are interior points either of these conditions violates the assumptions of condition (E). Thus $t_\star=L$ as claimed. 
\end{proof}

To close this section we note that a closed manifold does not have a canonical choice of the measurement set. Furthermore, it is easy to show that even for very large measurement sets the travel time map might or might not be a global isometry. For instance consider $\mathbb{S}^2$ with the measurement set 
    \[
    S=\mathbb{S}^2 \setminus (B(e_3,r)\cup B(-e_3,r)), \quad  \text{where }r \in (0, \pi/2).
    \]
    Then the north and south poles $e_3$ and $-e_3$ are contained in the exterior of $S$ and no geodesic between them can be extended as distance minimizer to some point in $S$. Thus, the respective travel time map $\CRR{\mathbb{S}^2, S}$ is not a global isometry. 
    
    On the contrary with the choice of the measurement set
    \[
    S=\mathbb{S}^2 \setminus B(e_3,r), \quad  \text{ where } r\in (0,\pi /2 ),
    \]
    the  travel time map $\CRR{\mathbb{S}^2, S}$ is a global isometry since the polar cap $B(e_3,r)$ is a simple manifold.

    Clearly the northern hemisphere
$
    M=\{x \in \mathbb{S}^{n-1}: \: x_n\geq 0\}
$
    is a smooth compact Riemannian manifold with a totally geodesic boundary
$
    \p M = \{x \in \mathbb{S}^{n-1}: \: x_n= 0\}.
$
    On $M$ no pair of interior points are antipodal to each other. Thus, they are connected by a unique great circle which intersects the boundary at exactly two points having the length $\pi$. Therefore, $M$ satisfies condition (B) of Proposition \ref{prop:global_isometry}. In particular, the map $\CRR{M,\p M}$ can be a global isometry even if there are conjugate points between boundary points, but not if there are interior conjugate points as we showed above.

\section{Truncated Compact Length Spaces}
\label{sec:length_Spaces}
In this section we survey some properties of compact length spaces and show that the travel time map of any $\flie{\eps}$-space preserves the length structure. We end the section by providing a list of many important properties between distances and their truncations.

\subsection{The travel time map of \texorpdfstring{$\flie{\eps}$}{AE}-space preserves the length structure}

There is no \textit{a priori} reason to assume that for a general compact length space $(X,d)$ with a closed measurement set $S \subset X$ the space $(\CR{}(X),\|\cdot\|_\infty)$ would need to be a length space, even though the ambient Banach space $(C(S), \|\cdot\|_\infty)$ is one. The following lemma will show that the length metric $d_L$ of $\CR{}(X)$, which is induced by the supremum metric, is finite.

\begin{lem}
Let $(X,d)$ be a compact length space with a closed measurement set $S\subset X$. If the travel time map $\CR{}$ is a topolological embedding then the length metric $d_{\infty,L}$ of $\CR{}(X)$, which is induced by the supremum metric, is finite. In particular, every pair of points $x,y \in \CR{}(X)$ is connected by a curve of finite length.
\end{lem}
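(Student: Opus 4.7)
The plan is to reduce the problem to the length structure already available on $X$ by pushing forward a finite-length curve through the travel time map. Fix $x, y \in \CR{}(X)$. Since $\CR{}$ is a topological embedding and $X$ is compact, there exist unique preimages $p = \CR{}^{-1}(x)$ and $q = \CR{}^{-1}(y)$ in $X$. Because $(X,d)$ is a compact length space, its diameter is finite, so $d(p,q) < \infty$, and by the definition of a length space $d(p,q)$ is realized as an infimum of lengths of connecting curves; in particular there exists a curve $\gamma\colon [0,1]\to X$ from $p$ to $q$ with $L(\gamma) \leq d(p,q)+1 < \infty$.

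Next I would consider the pushed-forward curve $\tilde\gamma := \CR{}\circ\gamma\colon [0,1]\to \CR{}(X)$, which is continuous, starts at $x$ and ends at $y$. The key observation is that $\CR{}$ is $1$-Lipschitz with respect to $d$ and $\|\cdot\|_\infty$: this follows from the reverse triangle inequality $|d(p,z)-d(q,z)|\leq d(p,q)$ applied pointwise in $z\in S$, as already noted in the paragraph following Definition \ref{defn:TTD}. Consequently, for any partition $0=t_0<t_1<\cdots<t_N=1$,
\[
\sum_{i=1}^{N}\|\CR{}(\gamma(t_i))-\CR{}(\gamma(t_{i-1}))\|_\infty \leq \sum_{i=1}^{N}d(\gamma(t_i),\gamma(t_{i-1})) \leq L(\gamma).
\]
Taking the supremum over all partitions yields $L(\tilde\gamma)\leq L(\gamma)<\infty$, where the length of $\tilde\gamma$ in $(\CR{}(X),\|\cdot\|_\infty)$ coincides with its length in the ambient Banach space since $\tilde\gamma$ takes values in $\CR{}(X)$ and the metric is simply the restriction.

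Finally, since $\tilde\gamma$ is a curve in $\CR{}(X)$ connecting $x$ to $y$ with finite length, the length metric satisfies $d_{\infty,L}(x,y)\leq L(\tilde\gamma) \leq L(\gamma) < \infty$, proving both assertions of the lemma.

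I do not expect any significant obstacle: the only subtle point is making clear that the length of $\tilde\gamma$ computed within the subspace $\CR{}(X)$ agrees with the one computed using $\|\cdot\|_\infty$ in $C(S)$, which is immediate since the length of a curve depends only on the values of the metric at pairs of points on the curve's image. The argument is essentially a one-line consequence of the $1$-Lipschitz property of $\CR{}$ combined with the fact that $X$ is a compact length space.
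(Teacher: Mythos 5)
Your proposal is correct and follows essentially the same route as the paper: pull $x,y$ back to $p,q\in X$, take a finite-length connecting curve in the length space $(X,d)$, push it forward by the $1$-Lipschitz map $\CR{}$, and bound the length of the image curve by that of the original via partitions. The paper's proof is word-for-word this argument, so there is nothing to add.
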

\begin{proof}
For $x,y \in \CR{}(X)$ we consider the inverse images $p:=\CR{}^{-1}(x), \: q:=\CR{}^{-1}(y) \in X$. Since $(X,d)$ is a length space there is a finite length curve $\gamma\colon [0,1]\to X$ such that $\gamma(0)=p$ and $\gamma(1)=q$. Thus, the curve $\tilde \gamma :=\CR{} \circ \gamma\colon [0,1] \to \CR{}(X)$ is continuous, begins at $x$ and ends at $y$. Furthermore, since the travel time map is a contraction we have for any
\[
0=t_0<t_1<t_2<\ldots <t_{N-1}<t_N=1,
\]
that
\[
\|\tilde \gamma(t_{i-1})-\tilde \gamma(t_i)\|_\infty\leq d(\gamma(t_{i-1}),\gamma(t_i)), \quad \text{for all } i \in \{1,\ldots,N\}.
\]
Since the curve $\gamma$ has a finite length, the former estimate implies that also the curve $\tilde \gamma$ has a finite length. 

We have proved that all points of $\CR{}(X)$ can be connected by a curve of finite length contained in $\CR{}(X)$. Therefore, $d_{\infty,L}$ is finite. 
\end{proof}

\begin{prop}
\label{thm:isometry_of_length_spaces}
    If a compact length space $(X,d)$ with a closed measurement set $S\subset X$ satisfies the $\flie{\eps}$ property for some $\eps>0$ then the travel time map $\CR{} \colon (X,d) \to (\CR{}(X),d_{\infty,L})$ is a bijective metric isometry.
\end{prop}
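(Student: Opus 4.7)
The plan is to prove the isometry equality $d(p,q) = d_{\infty,L}(\CR{}(p), \CR{}(q))$ for all $p,q \in X$ by establishing two opposing inequalities. Bijectivity is immediate, since the $\flie{\eps}$ hypothesis builds in that $\CR{}$ is a topological embedding onto $\CR{}(X)$.

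For the easy direction $d_{\infty,L}(\CR{}(p), \CR{}(q)) \leq d(p,q)$, I would use that $\CR{}$ is always $1$-Lipschitz (the reverse triangle inequality). Given any continuous curve $\gamma\colon [0,1] \to X$ from $p$ to $q$, the composition $\CR{} \circ \gamma$ is a continuous curve in $\CR{}(X)$ connecting $\CR{}(p)$ to $\CR{}(q)$, and the $1$-Lipschitz property ensures the variations satisfy $\|\CR{}(\gamma(t_{i-1})) - \CR{}(\gamma(t_i))\|_\infty \leq d(\gamma(t_{i-1}),\gamma(t_i))$ on any partition, so that $L(\CR{}\circ\gamma) \leq L(\gamma)$. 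Taking infimum over $\gamma$ and invoking the length-space hypothesis $d = d_L$ on $X$ yields the upper bound.

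For the harder direction $d(p,q) \leq d_{\infty,L}(\CR{}(p), \CR{}(q))$, the $\flie{\eps}$ condition is essential. Let $\tilde\gamma\colon [0,1]\to \CR{}(X)$ be any continuous curve from $\CR{}(p)$ to $\CR{}(q)$. Since $\CR{}^{-1}$ is continuous on $\CR{}(X)$, the pullback $\gamma := \CR{}^{-1}\circ \tilde\gamma$ is a continuous curve in the compact space $X$ from $p$ to $q$. By uniform continuity of $\gamma$, I can select a partition $0 = t_0 < t_1 < \cdots < t_N = 1$ fine enough that $d(\gamma(t_{i-1}), \gamma(t_i)) < \eps$ for every $i$. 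The $\flie{\eps}$ property then upgrades each of these short-scale distances to an exact equality $d(\gamma(t_{i-1}),\gamma(t_i)) = \|\tilde\gamma(t_{i-1})-\tilde\gamma(t_i)\|_\infty$. Combining the triangle inequality $d(p,q) \leq \sum_i d(\gamma(t_{i-1}),\gamma(t_i))$ with the definition of $L(\tilde\gamma)$ as the supremum of partition sums gives $d(p,q) \leq L(\tilde\gamma)$. Taking infimum over $\tilde\gamma$ delivers the lower bound.

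The main subtlety is in the lower bound: one might be tempted to invoke an $\eps$-local isometry property for $\CR{}^{-1}$, which is not directly given by the $\flie{\eps}$ hypothesis. The key maneuver is to apply the forward $\eps$-local isometry only to consecutive vertices of a partition chosen using uniform continuity downstairs in $X$, which automatically controls the image spacing upstairs in $\CR{}(X)$. Once this observation is in place the two inequalities combine to yield the stated metric isometry between $(X,d)$ and $(\CR{}(X), d_{\infty,L})$.
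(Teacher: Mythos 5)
Your argument is correct, and it reaches the conclusion by a genuinely different route in the crucial step. The paper first shows that $\CR{}$ preserves the length of every finite-length curve (a partition argument using uniform continuity and the $\flie{\eps}$ property, much like your lower-bound step), and then, to convert length preservation into the metric identity $d(p,q)=d_{\infty,L}(\CR{}(p),\CR{}(q))$, it invokes the Hopf--Rinow--Cohn-Vossen theorem to produce a $d_{\infty,L}$-minimizing curve $\tilde\sigma$ in $\CR{}(X)$ and a lifting lemma for local isometries (\cite[Lemma 3.4.17]{burago2001course}) to pull $\tilde\sigma$ back to a minimizer in $X$. You instead prove the two inequalities separately: the upper bound by pushing forward arbitrary curves of $X$ under the $1$-Lipschitz map $\CR{}$ and using $d=d_L$, and the lower bound by pulling back an \emph{arbitrary} curve $\tilde\gamma$ in $\CR{}(X)$ via the continuous inverse, choosing a partition fine enough downstairs that $\flie{\eps}$ applies to consecutive vertices, and then chaining the triangle inequality with the definition of $L_\infty(\tilde\gamma)$ as a supremum over partitions. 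Your version is more elementary and self-contained: it needs neither the existence of minimizers in $(\CR{}(X),d_{\infty,L})$ nor the lifting lemma, and the finiteness of $d_{\infty,L}$ (which the paper isolates as a separate preliminary lemma) falls out of your upper bound for free. What the paper's approach buys in exchange is the stronger intermediate fact that $\CR{}$ preserves the length of every curve exactly and that minimizers downstairs lift to minimizers upstairs, which is conceptually aligned with how the isometry is used later; but for the statement as posed, your two-inequality argument is complete and, if anything, cleaner.
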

\begin{proof}
We show first that the map $\CR{}$ preserves the lengths of curves. Let $\gamma\colon [0,1] \to (X,d)$ be a finite length curve, and denote $\tilde \gamma :=\CR{} \circ \gamma$.
Since the curve $\gamma$ is uniformly continuous there is $\delta>0$ such that whenever the times $t,s\in [0,1]$ are $\delta$-close then the points $\gamma(t),\gamma(s)$ are $\eps$-close in $(X,d)$.
Let 
\[
0=t_0<t_1<t_2<\ldots <t_{N-1}<t_N=1,
\]
be a partition of $[0,1]$ such that
\[
t_i-t_{i-1}<\delta, \quad \text{ for all } i \in \{1,\ldots,N\}.
\]
Then by the $\flie{\eps}$ property we have that
\begin{equation}
\label{eq:length_preserve}
\sum_{i=1}^Nd(\gamma(t_{i-1}),\gamma(t_i))=\sum_{i=1}^N\|\tilde \gamma(t_{i-1})-\tilde \gamma(t_i)\|_\infty.
\end{equation}
Furthermore, if $t \in (t_{i-1},t_i)$ for some $i \in \{1,\ldots,N\}$ then both $t-t_{i-1}$ and $t_i-t$ are smaller than $\delta$. Thus, we have that
\[
d(\gamma(t_{i-1}),\gamma(t))=\|\tilde \gamma(t_{i-1})-\tilde \gamma(t)\|_\infty, 
\quad \text{and} \quad
d(\gamma(t_{i}),\gamma(t))=\|\tilde \gamma(t_{i})-\tilde \gamma(t)\|_\infty,
\]
and in conjunction with Equation \eqref{eq:length_preserve} we get that $L_d(\gamma)=L_\infty(\tilde \gamma)$.\\ \newline
Let $p,q \in X$ 
and denote $x:=\CR{}(p)$ to $y:=\CR{}(q)$.
Since $(\CR{}(X),d_{\infty,L})$ is a compact length space it follows from Hopf–Rinow–Cohn-Vossen Theorem (c.f \cite[Theorem 2.5.28]{burago2001course}) that there is a $d_{\infty,L}$-distance minimizing curve $\tilde \sigma \colon [0,1] \to \CR{}(X)$ from $x$ to $y$. Since the map $\CR{}$ is $\eps$-local isometry it follows from \cite[Lemma 3.4.17]{burago2001course} that there exits a unique $d$-distance minimizing curve $\sigma\colon [0,1] \to (X,d)$ starting from $p$ such that
\[
\CR{}(\sigma(t))=\tilde \sigma(t) \quad \text{ for all } t\in [0,1].
\]
Since the map $\CR{}$ is one-to-one and 
$
\tilde \sigma(1)=\CR{}(q)
$
we must have that $\sigma(1)=q$. Thus, $\sigma$ is $d$-distance minimizing curve from $p$ to $q$, and 
due to the first part of the proof we get
\[
d(p,q)=L_d(\sigma)=L_\infty(\tilde \sigma)=d_{\infty,L}(x,y).
\]
This concludes the proof.
%
\end{proof}

\subsection{Properties of the truncation}

A \textit{correspondence} between any two sets $X$ and $Y$ is a subset of $X\times Y$ satisfying the following two conditions:
\begin{itemize}
    \item[(I)] for any $x \in X$ there is $y \in Y$ such that $(x,y) \in \mathfrak{R}$ 
    \item[(II)] for any $y \in Y$ there is $x \in X$ such that $(x,y) \in \mathfrak{R}$. 
\end{itemize}
If the sets $X$ and $Y$ are equipped with metrics $d_X$ and $d_Y$ respectively then the \textit{distortion} of the correspondence $\mathfrak{R}$ is the number
\[
\text{dis}(\mathfrak{R})=\sup\{|d_X(x,x')-d_Y(y,y')|: \: (x,y), \: (x',y') \in \mathfrak{R}\}.
\]

In particular, if $(X,d_X)$ and $(Y,d_Y)$ are compact metric spaces, then it was proven in \cite[Theorem 7.3.24.]{burago2001course} that     
\begin{equation}
\label{eq:GH_with_correspondece}
    \ghd{}((X,d_X),(Y,d_Y))=\frac{1}{2} \inf_{\mathfrak{R}}(\text{dis}(\mathfrak{R})),
\end{equation}
where the infimum is taken over all correspondences between metric spaces $(X,d_X)$ and $(Y,d_Y)$. 

The following lemma summaries some properties between a metric $d$ of a compact metric space $X$ and its $\eps$-truncation $d_\eps$, which was given in Definition \ref{defn:truncation}.

\begin{lem}
\label{lem:props_of_trunc}
Let $(X,d_X)$ be a compact metric space and $\eps>0$. Then
\begin{enumerate}
    \item
    The topologies of $d_X$ and the $\eps$-truncated metric $d_{X,\eps}$ coincide.
    \item
    The length structures induced by $d_X$ and $d_{X,\eps}$ coincide.
    \item
    Let $K_1,K_2 \subset X$ be closed. If $\hd{X,d_X}(K_1,K_2)<\eps$ then 
\begin{equation}
    \label{eq:Hausdorff_equality}
    \hd{X,d_{X,\eps}}(K_1,K_2)=\hd{X,d_X}(K_1,K_2).
\end{equation}
\item
If $(Y,d_Y)$ is a compact metric space and $f\colon (X,d_X) \to (Y,d_Y)$ is a metric isometry then $f$ is also a metric isometry between the truncated spaces $(X,d_{X,\eps})$ and $(Y,d_{Y,\eps})$.
\item
We always have that
\begin{equation}
\label{eq:GH_truncation_1}
\ghd{}((X,d),(X,d_\eps))=\frac{1}{2} \max\{\diam(X)-\eps, 0\}
\end{equation}
\item
If $(X,d_X)$ and $(Y,d_Y)$ are compact length spaces such that the respective truncated spaces $(X,d_{X,\eps})$ and $(Y,d_{Y,\eps})$ are isometric then the original spaces $(X,d_X)$ and $(Y,d_Y)$ are also isometric.
\end{enumerate}
\end{lem}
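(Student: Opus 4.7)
The plan is to dispatch items (1)--(4) directly from the defining formula $d_{X,\eps}(x,y) = \min(d_X(x,y), \eps)$, handle (5) via the correspondence characterization \eqref{eq:GH_with_correspondece} of the Gromov--Hausdorff distance, and use items (1) and (2) as the main tool for (6).

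For (1), since $d_{X,\eps} \le d_X$ the identity $(X, d_X) \to (X, d_{X,\eps})$ is $1$-Lipschitz, and conversely any $d_X$-ball of radius strictly less than $\eps$ coincides with the corresponding $d_{X,\eps}$-ball, so the two topologies agree. For (2), on any curve $\gamma\colon [0,1] \to X$ a uniform-continuity argument shows that on all sufficiently fine partitions consecutive points are within $d_X$-distance $\eps$, hence the two metrics produce identical Riemann sums and $L_{d_X}(\gamma) = L_{d_{X,\eps}}(\gamma)$. For (3), the hypothesis that the $d_X$-Hausdorff distance of $K_1, K_2$ is less than $\eps$ combined with compactness lets us realize $\inf_{y \in K_2} d_X(x,y) < \eps$ for each $x \in K_1$, so $d_{X,\eps}$ and $d_X$ agree on the realizing pairs; symmetry yields equality of Hausdorff distances. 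Item (4) is immediate from the chain $d_{Y,\eps}(f(x), f(y)) = \min(d_Y(f(x), f(y)), \eps) = \min(d_X(x,y), \eps) = d_{X,\eps}(x,y)$.

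For (5), the diagonal correspondence $\mathfrak{R}_0 = \{(x,x) : x \in X\}$ between $(X, d_X)$ and $(X, d_{X,\eps})$ has distortion
\[
\sup_{x, y \in X} \bigl(d_X(x,y) - d_{X,\eps}(x,y)\bigr) = \max\{\diam(X) - \eps, 0\},
\]
so \eqref{eq:GH_with_correspondece} delivers the upper bound. For the matching lower bound I may assume $\diam(X) > \eps$; picking $x_0, y_0 \in X$ with $d_X(x_0, y_0) = \diam(X)$, any correspondence $\mathfrak{R}$ contains pairs $(x_0, a), (y_0, b)$ for some $a, b \in X$, and
\[
\mathrm{dis}(\mathfrak{R}) \ge d_X(x_0, y_0) - d_{X,\eps}(a, b) \ge \diam(X) - \eps,
\]
which closes the argument.

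For (6), given a bijective isometry $f \colon (X, d_{X,\eps}) \to (Y, d_{Y,\eps})$, item (1) upgrades $f$ to a homeomorphism, and then items (2) and (4) yield
\[
L_{d_X}(\gamma) = L_{d_{X,\eps}}(\gamma) = L_{d_{Y,\eps}}(f \circ \gamma) = L_{d_Y}(f \circ \gamma)
\]
for every curve $\gamma$ in $X$. Since $(X, d_X)$ and $(Y, d_Y)$ are length spaces, the original distances are infima of such lengths and $\gamma \mapsto f \circ \gamma$ bijects curves with matching endpoints, therefore $d_X(x_1, x_2) = d_Y(f(x_1), f(x_2))$. The main obstacle is precisely this final item, where one must glue together the topological identification from (1), the length identification from (2), and the length-space hypothesis; the claim would otherwise fail, as shown by two-point spaces $\{a,b\}, \{c,d\}$ with $d(a,b) = 2\eps$ and $d(c,d) = 3\eps$, whose $\eps$-truncations are isometric but whose original metrics are not.
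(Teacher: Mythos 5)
Your proposal is correct and follows essentially the same route as the paper: small balls for (1), uniform continuity and fine partitions for (2), agreement of the two metrics below scale $\eps$ for (3) and (4), the diagonal correspondence together with a diameter comparison for (5), and reduction of (6) to preservation of curve lengths combined with the length-space hypothesis. The only cosmetic differences are that you prove the lower bound in (5) directly from correspondences where the paper cites a diameter inequality, and in (6) you argue via length preservation explicitly where the paper routes through the $\eps$-local isometry property and Proposition \ref{thm:isometry_of_length_spaces}.
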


\begin{proof}
1.
    Clearly all small balls of metrics $d_X$ and $d_{X,\eps}$ agree as sets. Thus, the first claim follows from the definition of the open sets in metric spaces.

2.
    If $\gamma\colon [0,1] \to X$ is a curve then we see by a similar argument as in the proof of Theorem \ref{thm:isometry_of_length_spaces} that the length of $\gamma$ is the same with respect to both metrics $d_X$ and $d_{X,\eps}$. Thus, the length structures induced by $d_X$ and $d_{X,\eps}$ coincide.

3.
    Let $K_1,K_2 \subset X$ be closed and $d^{H}(K_1,K_2)<\eps$. We denote $d^{H}(K_1,K_2):=\delta$. Then for each $\tilde \eps \in (\delta,\eps)$ it holds that
    \[
    K_2 \subset \bigcup_{x \in K_1} B_d(x,\tilde \eps)=\bigcup_{x \in K_1} B_{d_\eps}(x,\tilde \eps), \quad \text{ and } \quad
    K_1 \subset \bigcup_{y \in K_2} B_d(y,\tilde \eps)=\bigcup_{y \in K_2} B_{d_\eps}(y,\tilde \eps). 
    \]
    Thus, we get from the definition of the Hausdorff distance that
    \[
    \hd{X,d_{X,\eps}}(K_1,K_2)\leq \delta.
    \]
    On the other hand by the definition Hausdorff distance we may without loss of generality assume that for each $j \in \N$ large enough we have that
    \[
     K_1 \setminus U_j \neq \emptyset, \quad \text{where }U_j:=\bigcup_{y \in K_2} B_d
    \left(y,\tilde \delta-\frac{1}{j}\right)=\bigcup_{y \in K_2} B_{d_\eps}\left(y,\delta-\frac{1}{j}\right). 
    \]
    Thus,
    \[
    \hd{X,d_\eps}(K_1,K_2)\geq  \delta.
    \]
    We have verified the equality~\eqref{eq:Hausdorff_equality}.

4.
    Since the map $f\colon (X,d_X) \to (Y,d_Y)$ is a metric isometry we have
    \[
    d_X(p,q)=d_Y(f(p),f(q)), \quad \text{for all } p,q \in X.
    \]
    Therefore $d_X(p,q)\leq \eps$ if and only if $d_Y(f(p),f(q))<\eps$. Therefore,
    \[
    d_{X,\eps}(p,q)=d_{Y,\eps}(f(p),f(q)), \quad \text{for all } p,q \in X.
    \]

5.
    Suppose first that $\eps\leq\diam(X)$. Then by \cite[Exercise 7.3.14]{burago2001course} we have that 
    \[
    \ghd{}((X,d),(X,d_\eps))\geq \frac{1}{2}|\diam(X,d)-\diam(X,d_\eps)|=\frac{1}{2}(\diam(X,d)-\eps).
    \]
    On the other hand the diagonal set 
    \[
    \mathfrak{R}:=\{(x,x):\: x \in X\}
    \]
    is a  correspondence between the spaces $(X,d)$ and $(X,d_\eps)$ whose distortion is 
    \[
    \text{dis}(\mathfrak{R})=\sup_{x,x' \in X}|d(x,x')-d_\eps(x,x')|=\diam(X)-\eps.
    \]
    Hence, equation \eqref{eq:GH_with_correspondece} implies
    \[
    \ghd{}((X,d),(X,d_\eps)) = \frac{1}{2}(\diam(X)-\eps).
    \]
    in this case. If $\eps > \diam(X)$ then $d=d_\eps$ and the identity map of $X$ is an isometry. Thus, equation~\eqref{eq:GH_truncation_1} is true.

6.
    If $f\colon (X,d_{X,\eps}) \to (Y,d_{Y,\eps})$ is a bijective isometry then the same map between the original length spaces is a $\eps$-local isometry. Thus, by earlier parts of this lemma, and the proof of Proposition \ref{thm:isometry_of_length_spaces} the map $f\colon (X,d_X) \to (Y,d_Y)$ is a metric isometry. 
\end{proof}

\begin{rem}
\label{rem:non_isometric_trunc}
By the previous lemma the truncated Gromov--Hausdorff-distance is a valid metric in the space of compact length spaces, no matter what $\eps>0$ is chosen. However, this is not true if we drop the length space assumption as the following example illustrates. The Gromov--Hausdorff distance between the sets $X=\{0,1\}$ and $Y=\{0,2\}$ with the obvious metrics is $\frac12$, but $\ghd{}(X_\eps,Y_\eps)=0$ for all $\eps\in (1/2,1)$.

\end{rem}

\section{Verification of the \texorpdfstring{$\blie{\eps}$}{BE}-property from the travel time data of a \texorpdfstring{$\flie{\eps}$}{AE}-space}
\label{sec:verification}
By Proposition \ref{prop:(eps,delta)_isometry} we know that each $\flie{\eps}$-space $(X,d)$ with a closed measurement set $S\subset X$ is also a $\blie{\eps}$-space after possible choosing a smaller $\eps$. However, the proof is based on a compactness argument and does not directly give us a method to show that $\cR^{-1}_{X,S}$ is a $\eps$-local isometry based on the travel time data $\CR{}(X)$. It was shown in \cite[Section 2.4.]{burago2001course} that a complete metric space is a length space if and only if every pair of points has a midpoint. For our purposes we introduce the following generalization of the midpoint property.


\begin{defn}
    Let $\eps > 0$ and let $(X,d)$ be a metric space. We say that $(X,d)$ satisfies the $\eps$-local midpoint property if for all $x,y\in X$ with $d(x,y) < \eps$, there exists $m \in X$ such that
    $d(m,x) = d(m,y) =\frac{1}{2}d(x,y)$.
\end{defn}

According to the following proposition the $\eps$-local isometry property of the inverse map $\cR^{-1}_{X,S}$ is equivalent to the $\eps$-local midpoint property of the travel time data and $\eps$-local isometry property of the travel time map $\CR{}$. Furthermore, if we know \textit{a priori} that $(X,d)$ and $S$ satisfy the $\flie{\eps_0}$ for some $\eps_0>0$ we can find the best $\eps>0$ for which the $\eps$-local midpoint property is valid for the travel time data. Then, the space $(X,d)$ and $S$ need to automatically be in both $\flie{\eps}$- and $\blie{\eps}$-classes.

\begin{prop}
 \label{thm:delta_test}
    Let $\eps > 0$ and let $(X,d)$ be a compact length space with a closed measurement set $S \subset X$ such that the travel time map $\CR{}$ is a topological embedding. The following conditions are equivalent. 
    \begin{enumerate}
        \item $(X,d)$ with measurement set $S$ is $\blie{\eps}$
        \item The space $(X,d)$ with the set $S$ satisfy the $\flie{\eps}$-property and the compact metric space $\CR{}(X)$ with the supremum norm satisfies the $\eps$-local midpoint property.
        \item $(X,d)$ with the set $S$ satisfy the $\flie{\eps_0}$ property for some $\eps_0 \in (0,\eps)$ and $\CR{}(X)$ with the supremum norm satisfies the $\eps$-local midpoint property.
    \end{enumerate}
\end{prop}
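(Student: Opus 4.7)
The plan is to prove the chain (1) $\Rightarrow$ (2) $\Rightarrow$ (3) $\Rightarrow$ (1). Throughout, I will use that $\CR{}$ is always $1$-Lipschitz (reverse triangle inequality) and that both $X$ and $\CR{}(X)$ are compact.

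\textbf{Step 1: (1) $\Rightarrow$ (2).} Assume $\CR{}^{-1}$ is an $\eps$-local isometry. First I show $\flie{\eps}$: for $p,q\in X$ with $d(p,q)<\eps$, the $1$-Lipschitz property gives $\|\CR{}(p)-\CR{}(q)\|_\infty\leq d(p,q)<\eps$, so the $\eps$-local isometry condition on $\CR{}^{-1}$ forces equality. For the midpoint property, pick $x,y\in\CR{}(X)$ with $\|x-y\|_\infty<\eps$ and let $p=\CR{}^{-1}(x)$, $q=\CR{}^{-1}(y)$; then $d(p,q)=\|x-y\|_\infty<\eps$. Since $(X,d)$ is a compact length space it has midpoints (by Hopf--Rinow--Cohn-Vossen and the standard midpoint characterization), producing $m\in X$ with $d(m,p)=d(m,q)=\tfrac12 d(p,q)<\eps$. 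Applying $\flie{\eps}$ twice transfers this to $\CR{}(m)$ being a midpoint of $x,y$ in $(\CR{}(X),\|\cdot\|_\infty)$.

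\textbf{Step 2: (2) $\Rightarrow$ (3).} This is immediate from the nesting of the $\flie{\cdot}$-condition in the $\eps$-parameter: any $\eps_0\in(0,\eps)$ works.

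\textbf{Step 3: (3) $\Rightarrow$ (1).} This is where the real work is. Fix $x,y\in\CR{}(X)$ with $\|x-y\|_\infty<\eps$ and let $p=\CR{}^{-1}(x)$, $q=\CR{}^{-1}(y)$. I want $d(p,q)=\|x-y\|_\infty$; the bound $\|x-y\|_\infty\leq d(p,q)$ is free. Since $\CR{}$ is a topological embedding and $\CR{}(X)$ is compact, $\CR{}^{-1}$ is uniformly continuous, so there exists $\delta>0$ such that $\|u-v\|_\infty<\delta$ implies $d(\CR{}^{-1}(u),\CR{}^{-1}(v))<\eps_0$. Using the $\eps$-local midpoint property iteratively (permissible at every stage because every freshly created consecutive gap has supremum distance at most $\tfrac12\|x-y\|_\infty<\eps$), I construct for each $n$ a chain $x=x_0,x_1,\dots,x_{2^n}=y$ in $\CR{}(X)$ with $\|x_i-x_{i+1}\|_\infty=2^{-n}\|x-y\|_\infty$. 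Choose $n$ large enough that $2^{-n}\|x-y\|_\infty<\delta$. Setting $p_i=\CR{}^{-1}(x_i)$, uniform continuity yields $d(p_i,p_{i+1})<\eps_0$, so the $\flie{\eps_0}$-property applies and gives $d(p_i,p_{i+1})=\|x_i-x_{i+1}\|_\infty$. Summing along the chain and using the triangle inequality,
\[
d(p,q)\leq\sum_{i=0}^{2^n-1}d(p_i,p_{i+1})=\sum_{i=0}^{2^n-1}\|x_i-x_{i+1}\|_\infty=\|x-y\|_\infty.
\]

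\textbf{Expected obstacle.} The subtle point is Step 3: one cannot directly compare $d(p,q)$ and $\|x-y\|_\infty$ from a single application of $\flie{\eps_0}$, because we do not know a priori that $d(p,q)<\eps_0$ (the $\flie{\eps_0}$ hypothesis only controls small intrinsic distances, not small extrinsic ones). The midpoint property is what lets us subdivide in the image and pull back via uniform continuity; without it, the argument collapses. Once the subdivision is in place, the telescoping inequality closes the gap painlessly.
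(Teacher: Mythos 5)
Your proof is correct, and the cycle $(1)\Rightarrow(2)\Rightarrow(3)\Rightarrow(1)$ together with the arguments for the first two implications matches the paper essentially verbatim: the paper also derives $\flie{\eps}$ from $\blie{\eps}$ via the $1$-Lipschitz bound, and transports a midpoint of $(X,d)$ (which exists by \cite[Lemma 2.4.8]{burago2001course}) into $\CR{}(X)$ using the local isometry. Where you genuinely diverge is in $(3)\Rightarrow(1)$. The paper runs the dyadic midpoint construction to its limit, following \cite[Theorem 2.4.16]{burago2001course}, to produce a Lipschitz curve $\gamma\colon[0,1]\to\CR{}(X)$ with $L_\infty(\gamma)=\|\CR{}(p)-\CR{}(q)\|_\infty$, and then invokes Proposition \ref{thm:isometry_of_length_spaces} (that the $\flie{\eps_0}$-property makes $\CR{}$ a global isometry onto $(\CR{}(X),d_{\infty,L})$) to convert the length of that curve into $d(p,q)$. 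You instead stop the subdivision at a finite level $n$, use uniform continuity of $\CR{}^{-1}$ (compactness) to force the preimage gaps below $\eps_0$, apply $\flie{\eps_0}$ to each link, and telescope with the triangle inequality. This buys you a more elementary and self-contained argument: no completeness/extension step, no appeal to the induced length metric $d_{\infty,L}$ or to Hopf--Rinow in the image, and no dependence on Proposition \ref{thm:isometry_of_length_spaces}. What the paper's route buys in exchange is the by-product that $\CR{}(X)$ carries a genuine geodesic between $\CR{}(p)$ and $\CR{}(q)$ realizing the sup-norm distance, which is structurally informative but not needed for the equivalence itself. Your observation in the ``expected obstacle'' paragraph --- that a single application of $\flie{\eps_0}$ cannot work because smallness of $\|x-y\|_\infty$ does not a priori bound $d(p,q)$ --- is exactly the right diagnosis of why the midpoint hypothesis is indispensable.
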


\begin{proof}
1$\implies$2:
    We assume that the map $\CR{}^{-1}:(\CR{}(X),\|\cdot\|_\infty)\to (X,d)$ is a $\eps$-local metric isometry and show first that also the forward map $\CR{}$ is a $\eps$-local isometry. Let $p,q \in X$, and suppose that $d(p,q) < \eps$. Since the map $\CR{}$ is nonexpansive and we have that
    \[
    \|\CR{}(p)-\CR{}(q)\|_\infty\leq d(p,q) < \eps.
    \]
    Thus, $\|\CR{}(p)-\CR{}(q)\|_\infty = d(p,q)$. This is precisely the condition for $\CR{}$ to be a $\eps$-local isometry. 
    
    We now show that the compact metric space $(\CR{}(X),\|\cdot\|_\infty)$ satisfies the $\eps$-local midpoint property. Let $p,q\in X$ be such that $\| \CR{}(p)-\CR{}(q)\| < \eps$, then $d(p,q) = \|\CR{}(p)-\CR{}(q)\|_\infty$. Since $(X,d)$ is a length space it satisfies the midpoint property globally per  \cite[Lemma 2.4.8.]{burago2001course}. Thus there exists a point $m \in X$ such that $d(p,m) = d(q,m) = \frac{1}{2}d(p,q)$. Furthermore, $d(p,q) = d(p,m) + d(m,q)$ so we observe that since $\CR{}$ is a nonexpansive map we have that
    \begin{align*}
        &\|\CR{}(p)-\CR{}(m)\|_\infty + \|\CR{}(m)-\CR{}(q)\|_\infty 
        \\
        &\leq d(p,m)+d(m,q) = d(p,q) = \|\CR{}(p)-\CR{}(q)\|_\infty < \eps
    \end{align*}
    Thus $\|\CR{}(p)-\CR{}(m)\|_\infty, \|\CR{}(m)-\CR{}(q)\|_\infty < \eps$, and by the local $\eps$-local isometry property we have that 
    $$
    \|\CR{}(p)-\CR{}(m)\|_\infty = d(p,m) = \frac{1}{2}d(p,q) = \frac{1}{2}\|\CR{}(p)-\CR{}(q)\|_\infty,
    $$
    and similarly $\|\CR{}(q)-\CR{}(m)\|_\infty = \frac{1}{2}\| \CR{}(p)-\CR{}(q)\|$. Hence, $\CR{}(m)\in \CR{}(X)$ is a midpoint of $\CR{}(p)$ and $\CR{}(q)$. 

2$\implies$3:
    Condition 2 is stronger than condition 3. 

3$\implies$1:
    Suppose that the compact metric space $(\CR{}(X),\|\cdot\|_\infty)$ satisfies the $\eps$-local midpoint property. Let $\CR{}(p),\CR{}(q)\in \CR{}(X)$ be such that $\|\CR{}(p)-\CR{}(q)\|_\infty<\eps$. A number $0\leq \frac{k}{2^n} \leq 1$ for some $k,n \in \N$ is called a dyadic rational, and the set $\cD$ of these rationals form a dense subset of $[0,1]$. From here we utilize the $\eps$-local midpoint property and follow the steps in the proof of \cite[Theorem 2.4.16]{burago2001course} to find a function $\gamma \colon \cD \to \CR{}(X)$
which satisfies the following Lipschitz property
\begin{equation}
    \label{eq:Lip_cont_gamma}
\|\gamma(t)-\gamma(t')\|_\infty\leq |t-t'|\|\gamma(0)-\gamma(1)\|_\infty, \quad \text{ for any } t,t' \in \cD.
\end{equation}
Since the metric space 
$(\CR{}(X), \|\cdot\|_\infty)$ is complete we get from \cite[Proposition 1.5.9.]{burago2001course} that the map $\gamma$ has a unique Lipschitz continuous extension to $[0,1]$. 
Therefore, $\gamma\colon [0,1] \to \CR{}(X)$ is a Lipschitz continuous path connecting $\CR{}(p)$ to $\CR{}(q)$. Furthermore, the estimate \eqref{eq:Lip_cont_gamma} implies that 
\[
L_\infty(\gamma)=\|\CR{}(q)-\CR{}(q)\|_\infty.
\]
Since the space $(X,d)$ with the measurement set $S$ satisfy the $\flie{\eps_0}$-property it follows from Proposition \ref{thm:isometry_of_length_spaces} that the map $\CR{}$ is a metric isometry between the length spaces $(X,d)$ and $(\CR{}(X),d_{\infty,L})$. Therefore, we have that
    \[
    d(p,q)=d_{\infty,L}(\CR{}(p),\CR{}(q))=L_\infty(\gamma)=\|\CR{}(p)-\CR{}(q)\|_\infty.
    \]
This implies that $\cR^{-1}_{X,S}$ is a $\eps$-local isometry.
\end{proof}

\section{Proofs of the Main Theorems}
\label{Sec:proofs}
Equation \eqref{eq:GH_with_correspondece} implies a well known upper bound for the Gromov--Hausdorff distance between compact metric spaces $(X,d_X)$ and $(Y,d_Y)$:
    \begin{equation}
    \label{prop:ghd_upper_bound}
        \ghd{}(X,Y) \leq
        \frac12\max\{\diam(X),\diam(Y)\}
    \end{equation}
We are ready to prove our second main result.

\begin{proof}[Proof of Theorem~\ref{thm:stability_1}]
First we consider the case when 
\begin{equation}
    \label{eq:dist_of_data_1}
\hd{C(S_1)}(\CRR{X_1,S_1}(X_1),\CRR{X_2,S_2,\phi}(X_2))
 < \eps.
\end{equation}
 By Proposition \ref{thm:local_isometry} the maps $\CRR{X_i,S_i} \colon (X_i,d_o) \to (C(S_i),\|\cdot\|_\infty)$ are $\eps$-local isometries for both $i \in \{1,2\}$. Since the map $\phi \colon S_1 \to S_2$ is a homeomorphism, the induced map
\[
\Phi\colon (C(S_2),\|\cdot\|_\infty) \to (C(S_1),\|\cdot\|_\infty), \quad \Phi(f)=f\circ \phi
\]
is an isometry. Thus, the map $\Phi \circ \CRR{X_2,S_2} \colon (X_2,d_2) \to (C(S_1),\|\cdot\|_\infty)$ is a $\eps$-isometry with the range
\[
(\Phi \circ \CRR{X_2,S_2})(X_2)=\CRR{X_2,S_2,\phi}(X_2).
\]
Moreover, we get from the Definition \ref{defn:special} and Proposition \ref{prop:(eps,delta)_isometry} that the maps $\CRR{X_1,S_1}$ and $\Phi \circ \CRR{X_2,S_2}$ are isometries from the truncated spaces $(X_1,d_{1,\eps})$ and $(X_2,d_{2,\eps})$ into the truncated complete metric space $(C(S_1),d_{\infty,\eps})$. Therefore, we get from the assumption \eqref{eq:dist_of_data_1} and Lemma \ref{lem:props_of_trunc} that 
\[
\hd{C(S_1),d_{\infty,\eps}}(\CRR{X_1,S_1}(X_1),\CRR{X_2,S_2,\phi}(X_2))
=
\hd{C(S_1),\|\cdot\|_\infty}(\CRR{X_1,S_1}(X_1),\CRR{X_2,S_2,\phi}(X_2)).
\]
Hence, the inequality \eqref{eq:dist_of_manifolds_2} follows from Definition \ref{defn:truncated_GH_dist} of the truncated Gromov--Hausdorff distance.

Then we suppose that the estimate \eqref{eq:dist_of_data_1} does not hold. Since the diameters of the truncated space $(X_i,d_{i,\eps})$ do not exceed $\eps$ we get from \eqref{prop:ghd_upper_bound} that $\ghd{\eps}((X_1,d_1),(X_2,d_2)) \leq \frac{\eps}{2}$. Thus, the inequality $\eqref{eq:dist_of_manifolds_2}$ follows trivially. 

If $\hd{C(S_1)}(\CRR{X_2,S_2}(X_1),\cR_{X_2,S_2,\phi}(X_2))=0$ then the inequality \eqref{eq:dist_of_manifolds_2} implies that the truncated spaces $(X_1,d_{1,\eps})$ and $(X_2,d_{2,\eps})$ are isometric. Thus, it follows from Lemma \ref{lem:props_of_trunc} that there is a metric isometry between the spaces $X_1$ and $X_2$. 
\end{proof}

In order to prove the uniqueness result for Finsler manifolds in Corollary \ref{cor:stability_2} we need to find a boundary adapted version of the classical Myers--Steenrod theorem \cite{myers-steenrod}: Every bijective metric isometry between complete smooth Riemannian or Finsler manifolds is a smooth Riemannian or Finslerian isometry. The Finsler version of this result was given for instance in \cite[Theorem A]{matveev2017myers}.


\begin{prop}[Myers–Steenrod Theorem for Compact Manifolds with Boundary]
 \label{prop:MS_thm}
Let $(M_1,F_1)$ and $(M_2,F_2)$ be compact, connected smooth reversible Finsler manifolds with boundary. If a map $f\colon M_1 \to M_2$ is a bijective metric isometry, then it is also a Finslerian isometry in the sense that it is a smooth map that pulls the metric $F_2$ back to $F_1$.  
\end{prop}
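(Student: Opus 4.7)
My plan is to reduce the statement to the classical Myers--Steenrod theorem for boundaryless reversible Finsler manifolds, \cite[Theorem A]{matveev2017myers}, through three steps: (i) show that $f$ maps $\partial M_1$ onto $\partial M_2$, so that it restricts to a bijective metric isometry $f^\interior\colon M_1^\interior \to M_2^\interior$ between the manifold interiors; (ii) apply a local version of Myers--Steenrod on small balls inside the interiors to conclude that $f^\interior$ is a smooth Finslerian isometry; (iii) extend smoothness and the pullback identity $f^*F_2 = F_1$ across the boundary.

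For step (i) I would use a purely metric characterization of boundary points that is automatically preserved by isometries. One convenient choice is the pointed Gromov--Hausdorff tangent cone: at an interior point the rescaled limit is a full Minkowski space, while at a boundary point it is a closed half Minkowski space, and these two model spaces are mutually non-isometric (their isometry groups at the basepoint differ). A more elementary alternative is extendability of short distance minimizers, namely $p \in M_i^\interior$ iff every sufficiently short distance minimizer ending at $p$ admits a distance-minimizing extension past $p$. Both properties are invariant under the bijective metric isometry $f$, and either forces $f(\partial M_1) = \partial M_2$.

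For step (ii) fix any $p \in M_1^\interior$. Since $\partial M_1$ is closed there is $r>0$ with $\overline{B}_1(p,3r) \subset M_1^\interior$, and by the boundary preservation of step (i) together with the continuity of $f$ we may shrink $r$ so that also $\overline{B}_2(f(p),3r) \subset M_2^\interior$. Inside such balls the ambient distance $d_i$ agrees with the induced length distance, because short distance minimizers stay away from the boundary. Thus $f$ restricts to a bijective metric isometry between two open reversible Finsler manifolds without boundary, to which the local form of Matveev's theorem applies, yielding smoothness of $f$ near $p$ together with $f^*F_2 = F_1$ on that neighborhood. Since $p$ was arbitrary, $f^\interior$ is a smooth Finslerian isometry.

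The main obstacle is step (iii), which I would tackle via Finslerian boundary normal coordinates. Because $\partial M_i$ is smooth and $F_i$ extends smoothly across $\partial M_i$ to a slightly larger ambient manifold, the Legendre transform of $F_i$ singles out a unique inward conormal direction at every boundary point; the associated inward Finslerian geodesics realize the distance to $\partial M_i$ for a short time and yield a smooth collar diffeomorphism $\Psi_i\colon \partial M_i \times [0,\delta_i) \to M_i$ in which the second coordinate $t$ equals the distance to $\partial M_i$. Since $f$ preserves both $\partial M$ and distances, it preserves the boundary distance function, so in these coordinates $f$ takes the form $(z,t) \mapsto (\phi_t(z), t)$. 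For $t>0$ the family $\phi_t$ is smooth by step (ii), and the technical heart of the argument is to show that $\phi_t$ extends smoothly down to $t=0$, converging in $C^\infty$ to the boundary restriction $\phi_0 := f|_{\partial M_1}$. This I would obtain by differentiating the identity $f^*F_2 = F_1$ in the normal direction and invoking smooth dependence on initial data of the Finslerian geodesic flow up to the boundary; once $\phi_t$ is smooth in $(z,t)$ up to $t=0$, smoothness of $f$ on all of $M_1$ with $f^*F_2 = F_1$ follows from composing with the smooth collar diffeomorphisms.
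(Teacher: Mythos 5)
Your steps (i) and (ii) are sound and run parallel to the paper's argument (the paper gets boundary preservation in one line from invariance of domain, which is cleaner than tangent cones or extendability of minimizers, but your alternatives work). The genuine gap is in step (iii), which you yourself flag as ``the technical heart'' and then only sketch. Writing $f$ in collar coordinates as $(z,t)\mapsto(\phi_t(z),t)$ and trying to prove $C^\infty$ convergence of $\phi_t$ as $t\to 0$ by ``differentiating $f^*F_2=F_1$ in the normal direction'' does not close the argument: the identity $f^*F_2=F_1$ is only established on the interior, it controls just the first derivative of $f$ (it says $df$ carries one unit sphere bundle to the other), and nothing you have written prevents the higher derivatives of $\phi_t$ from degenerating as $t\to 0$. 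Invoking ``smooth dependence of the geodesic flow up to the boundary'' is not a proof of that uniform control.

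The two missing ideas, which are exactly what the paper supplies, are these. First, $\phi_t$ does not depend on $t$ at all: within the collar each point has a unique nearest boundary point, a bijective metric isometry that maps boundary to boundary must carry the nearest boundary point of $p$ to the nearest boundary point of $f(p)$ and preserve the distance to the boundary, so in boundary normal coordinates $f(z,t)=(f|_{\partial M_1}(z),t)$ identically. Second, the smoothness of $f|_{\partial M_1}$ is obtained by applying the classical (closed-manifold) Myers--Steenrod theorem to the boundaries themselves: the length metric induced on each component of $\partial M_i$ by $d_i$ coincides with the intrinsic Finsler distance of $(\partial M_i,F_i|_{T\partial M_i})$, and $f$ restricts to a metric isometry between these closed Finsler manifolds, hence is a smooth Finslerian isometry of the boundaries. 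With these two observations the collar representation of $f$ is manifestly smooth, and the pullback identity extends from the interior to the boundary by continuity of the metrics. Without them, your step (iii) remains an unproven assertion.
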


\begin{proof}
    We prove first that the map $f$ and its inverse are smooth. 
    To this end we note that by the invariance of domain the function $f$ maps the interior of $M_1$ onto the interior of $M_2$ and the boundary of $M_1$ onto the boundary of $M_2$. For the interior the claim follows from the proof of traditional Myers–Steenrod theorem presented for instance in \cite[Theorem 5.6.15.]{petersen2006riemannian} for Riemannian manifolds and in \cite[Theorem A]{matveev2017myers} for Finsler manifolds. Hence, it suffices to show that $f$ and $f^{-1}$ are smooth near the boundaries. 
    
    We note that for each $i \in \{1,2\}$ we can use the Finslerian distance $d_i$ of the manifold $M_i$ to induce an intrinsic distance to each connected component of the boundary $\p M_i$. In particular, this distance agrees with the Finslerian distance $\rho_i$ of the closed Finsler manifold $(\p M_i,F_i|_{\p M_i})$. Thus, the restriction map $f \colon (\p M_1,\rho_1) \to (\p M_2,\rho_2)$ is a metric isometry for the Finslerian distances on each connected component of $\p M_1$. By the Myers--Steenrod theorem the restriction map $f|_{\p M_1}\colon (\p M_1,F_1|_{\p M_1}) \to (\p M_2,F_2|_{\p M_2})$ is a smooth Finslerian isometry.

    Since $M_i$ is compact there exists $\eps>0$ such that the boundary normal exponential map 
    \[
    \exp_{\p M_i}\colon [0,\eps)\times \p M_i \to M_i, \quad \exp_{\p M_i}(s,z)=\gamma_{z,\nu_i(z)}(s),
    \]
    is a diffeomorphism onto its image \cite[Lemma 3.3]{dehoop2020determination}. Here $\gamma_{z,\nu_i(z)}$ is the geodesic of $(M_i,g_i)$ whose initial conditions are $(z,\nu_i(z))$ and $\nu_i$ is the inward pointing boundary normal vector field. The inverse $\varphi_i$ of the map $\exp_{\p M_i}$ is called the \textit{boundary normal coordinates} and for each point $p \in M_i$, which also lies in the domain of $\varphi_i$, we have that
    $
    \varphi_i(p)=(s_i(p),z_i(p)) \in [0,\eps)\times \p M_i
    $
    where $z_i(p)$ is the closest boundary point to $p$ and $s_i(p)=d_i(z_i(p),p)=d(\p M_i,p)$ is the distance from $p$ to $\p M_i$. For further information about the boundary normal coordinates we refer to \cite[Example 6.44.]{lee2018introduction}. 

    Since the restriction map $f|_{\p M_1}$ is a smooth Finslerian isometry we have that the map 
    \begin{equation}
    \label{eq:local_representation_of_f}
    [0,\eps)\times\p M_1  \ni (s,z)\mapsto (s,f(z))\in [0,\eps)\times \p M_2
    \end{equation}
    is smooth. As the map $f$ is a metric isometry it holds that for each $p \in M_1$, which is also in the domain of the $\varphi_1$, we have that
    \[
    s_1(p)=s_2(f(p)), \quad \text{ and } \quad 
    f(z_1(p))=z_2(f(p)).
    \]
    Therefore, the local representation $\varphi_2\circ f \circ \varphi_{1}^{-1}$ of the map $f$, is just given by the formula \eqref{eq:local_representation_of_f}. Thus, the map $f$ is also smooth near the boundary of $\p M_1$. By reversing the roles of $M_1$ and $M_2$ we can analogously show that $f^{-1}$ is smooth near $\p M_2$.

    Finally, to verify the pullback property $f^\ast F_2=F_1$.
     By the proof of the original version of the theorem 
    the pullback property holds over the interior. Because $f$ is a diffeomorphism of the whole manifold with boundary and the metrics are continuous, the pullback property holds over the boundary as well.
\end{proof}
We are ready to prove Corollary \ref{cor:stability_2}.

\begin{proof}[Proof of Corollary \ref{cor:stability_2}]
If $\hd{C(S_1)}(\CRR{X_2,S_2}(M_1),\cR_{X_2,S_2,\phi}(M_2))=0$ then by the same reasoning as above there is a metric isometry between the manifolds $M_1$ and $M_2$. Hence, the uniqueness claim of type (a) follows from the classical Myers--Steenrod result. The uniqueness claim of type (b) follows from the generalization of the Myers--Steenrod theorem, given in Proposition \ref{prop:MS_thm}. 
\end{proof}

As to be expect the proof of Theorem \ref{thm:stability_2} builds heavily on the proof of Theorem \ref{thm:stability_1}. However, in order to prove Theorem \ref{thm:stability_2} we need to justify the existence of the Lipschitz constant appearing in the inequality \eqref{eq:dist_of_data}. The following proposition achieves exactly this.

\begin{prop} \label{prop:ghd_equivalent}
The original and truncated Gromov--Hausdorff distances are comparable in the following sense:
    \begin{itemize}
    \item[(I)]
    Let $\eps>0$. If $(X,d_X)$ and $(Y,d_Y)$ are compact metrics spaces then
\begin{equation}
\label{eq:bi_Lip_estimate_1}
    \ghd{\eps}(X,Y) \leq  \ghd{}(X,Y).
\end{equation}
    \item[(II)]
    Let $\eps>0$ and $D>0$. If $(X,d_X)$ and $(Y,d_Y)$ are compact length spaces with 
    $\diam(X),\diam(Y)\leq D$ then
\begin{equation}
\label{eq:bi_Lip_estimate_2}
    \ghd{}(X,Y) \leq \Big(\frac{2D}{\eps} + 1\Big)\ghd{\eps}(X,Y) .
\end{equation}
    \end{itemize}
\end{prop}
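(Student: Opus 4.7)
This part should follow directly from the correspondence formula \eqref{eq:GH_with_correspondece}. The key observation I would exploit is that the truncation map $t\mapsto \min(t,\eps)$ is $1$-Lipschitz on $[0,\infty)$, so that for any correspondence $\mathfrak{R}\subset X\times Y$ and any pairs $(x,y),(x',y')\in\mathfrak{R}$ one has
\[
|d_{X,\eps}(x,x') - d_{Y,\eps}(y,y')| \leq |d_X(x,x') - d_Y(y,y')|.
\]
Thus the distortion of every correspondence with respect to the truncated metrics is dominated by its distortion with respect to the original metrics; taking suprema and then infima over $\mathfrak{R}$ gives \eqref{eq:bi_Lip_estimate_1}.

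\textbf{Plan for Part (II).} Write $\delta := \ghd{\eps}(X,Y)$. My plan is to split by the size of $\delta$. If $\delta > \eps/4$, then $(2D/\eps + 1)\delta > D/2 \geq \ghd{}(X,Y)$ by the trivial bound \eqref{prop:ghd_upper_bound}, so \eqref{eq:bi_Lip_estimate_2} is immediate. The substantive case is $\delta \leq \eps/4$. There I would fix $\tau>0$ and pick a correspondence $\mathfrak{R}$ with truncated distortion less than $2\delta+\tau$ (via \eqref{eq:GH_with_correspondece}), then bound its untruncated distortion by a chaining argument that uses the length space hypothesis to lift control on short scales to control on all scales.

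Concretely, given $(x,y),(x',y')\in\mathfrak{R}$, I would pick an arc-length parametrised curve $\sigma\colon [0,L]\to X$ from $x$ to $x'$ with $L\leq d_X(x,x')+\tau$, partition $[0,L]$ into $N=\lceil L/\alpha\rceil$ equal pieces for some $\alpha<\eps - 2\delta - \tau$, and set $x_i = \sigma(iL/N)$, so that $d_X(x_{i-1},x_i)\leq \alpha < \eps$. For each $x_i$ I pick some $y_i$ with $(x_i,y_i)\in\mathfrak{R}$, taking $y_0=y$ and $y_N=y'$. Because $d_{X,\eps}(x_{i-1},x_i)=d_X(x_{i-1},x_i)\leq \alpha$, the truncated distortion bound forces $d_{Y,\eps}(y_{i-1},y_i) \leq \alpha + 2\delta + \tau < \eps$, so truncation is inactive on the $Y$ side as well, giving $d_Y(y_{i-1},y_i) \leq d_X(x_{i-1},x_i) + 2\delta + \tau$. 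Telescoping yields $d_Y(y,y') \leq L + N(2\delta+\tau)$; using $N \leq L/\alpha + 1$ and sending $\tau\downarrow 0$, $\alpha\uparrow \eps-2\delta$ produces
\[
d_Y(y,y') - d_X(x,x') \leq 2\delta\Big(\frac{d_X(x,x')}{\eps - 2\delta} + 1\Big).
\]
The assumption $\delta \leq \eps/4$ gives $\eps - 2\delta \geq \eps/2$, and $d_X(x,x')\leq D$, so the right-hand side is at most $2\delta(2D/\eps + 1)$. A symmetric chain starting from a curve in $Y$ gives the reverse inequality, showing that the untruncated distortion of $\mathfrak{R}$ is at most $2\delta(2D/\eps + 1)$. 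Invoking \eqref{eq:GH_with_correspondece} one more time yields \eqref{eq:bi_Lip_estimate_2}.

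\textbf{Expected main obstacle.} The delicate point is ensuring that the chain runs entirely in the regime where truncation is inactive on \emph{both} sides simultaneously. This is why the argument needs $\delta \leq \eps/4$: the condition $\alpha + 2\delta < \eps$ has to hold while $\alpha$ is still large enough that $N \leq d_X(x,x')/\alpha + 1$ is bounded by something of order $D/\eps$. Splitting off the large-$\delta$ regime by appeal to \eqref{prop:ghd_upper_bound} avoids having to interpolate between the chain argument and the trivial bound. A minor technical nuisance is choosing the intermediate points $y_i$ coherently from the correspondence while pinning $y_0=y$ and $y_N=y'$, but this is automatic once $(x,y)$ and $(x',y')$ are used as the endpoints of the chain.
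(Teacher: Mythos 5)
Your proposal is correct. For Part (II) it is essentially the paper's argument: the same case split at $\ghd{\eps}(X,Y)=\eps/4$, the same appeal to the trivial diameter bound \eqref{prop:ghd_upper_bound} in the large regime, and in the small regime the same strategy of taking a near-optimal correspondence for the truncated metrics and chaining along a curve in the length space with mesh below the scale at which truncation activates, yielding the distortion bound $2\delta\bigl(\tfrac{2D}{\eps}+1\bigr)$; the only cosmetic difference is that you subdivide an approximate geodesic of length $\le d_X(x,x')+\tau$ with mesh $\alpha\uparrow\eps-2\delta$ and pass to limits, while the paper subdivides an exact minimizer with mesh exactly $\eps/2$ --- both give the same constant, and your variant does not even need existence of minimizers. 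For Part (I), however, you take a genuinely different and cleaner route: the paper goes back to the embedding definition of $\ghd{}$, passes to a common ambient space $Z$, invokes Lemma \ref{lem:props_of_trunc} to compare Hausdorff distances in $Z$ and $Z_\eps$, and still needs a separate case for $\ghd{}(X,Y)\ge\eps$; you instead observe that $t\mapsto\min(t,\eps)$ is $1$-Lipschitz, so every correspondence has truncated distortion dominated by its untruncated distortion, and \eqref{eq:GH_with_correspondece} finishes in one line with no case analysis. Your version of (I) is shorter, avoids the auxiliary lemma entirely, and makes transparent why no length-space or diameter hypothesis is needed there.
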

\begin{proof}
Claim (I):
We first consider the case when $\ghd{}(X,Y) < \eps$. To verify the inequality in \eqref{eq:bi_Lip_estimate_1} we take any $j \in \N$ such that $\ghd{}(X,Y)+\frac{1}{j}<\eps$. Then by the definition of Gromov--Hausdorff distance there is a metric space $(Z,d_Z)$ and isometric embeddings $f\colon (X,d_X) \to (Z,d_Z)$ and $g\colon (Y,d_y)) \to (Z,d_Z)$ such that
    \[
    \hd{Z,d_Z}(f(X),g(Y))\leq \ghd{}(X,Y)+\frac{1}{j}<\eps.
    \]
    Thus it follows from Lemma \ref{lem:props_of_trunc} that
    \[
    \hd{Z,d_Z}(f(X),g(Y))=\hd{Z,d_{Z,\eps}}(f(X),g(Y)),
    \]
    and that the maps
    \[
    f \colon (X,d_{X,\eps}) \to (Z,d_{Z,\eps}), \quad \text{ and } \quad 
    g \colon (Y,d_{Y,\eps}) \to (Z,d_{Z,\eps})
    \]
    are metric isometries between the respective truncated spaces. Hence, the inequality \eqref{eq:bi_Lip_estimate_1} holds due to the arbitrarity of the integer $j$. 
    
    Since the diameter of $(X,d_{X,\eps})$ and $(Y,d_{Y,\eps})$ is at most $\eps$ we can use the estimate \eqref{prop:ghd_upper_bound}, in the case $\ghd{}(X,Y)\geq \eps$, to obtain the inequality \eqref{eq:bi_Lip_estimate_1}.

Claim (II):
    First consider the case where $\ghd{\eps}(X,Y) < \frac{\eps}{4}$. To verify the inequality \eqref{eq:bi_Lip_estimate_2} let $\delta \in (\ghd{\eps}(X,Y), \frac{\eps}{4})$. By \cite[Theorem 7.3.25]{burago2001course} we have
    \begin{align*}
        \ghd{\eps}(X,Y) := \frac{1}{2} \inf(\text{dis}_\eps(\mathfrak{R})),
    \end{align*}
    where the infimum is taken over all correspondences $\mathfrak{R}$ between the sets $X$ and $Y$. Furthermore, in the above we used the notation for the distortion
    \begin{equation}
    \label{eq:distortion_correspondence}
        \text{dis}_\eps(\mathfrak{R}) = \sup\{|d_{X,\eps}(x,x') - d_{Y,\eps}(y,y')|:(x,y),(x',y')\in\mathfrak{R}\},
     \end{equation}
    between the truncated spaces. 
    
    Since $\ghd{\eps}(X,Y) < \delta < \frac{\eps}{4}$, we may find a correspondence $\mathfrak{R}$ such that $\text{dis}_\eps(\mathfrak{R})\leq 2\delta < \frac{\eps}{2}$. Our goal is to show that the distortion of the correspondence $\mathfrak{R}$ between the original compact length spaces $(X,d_X)$ and $(Y,d_Y)$ is bounded from above by $2\delta \Big(\frac{2D}{\eps} + 1\Big)$. From here, we arrive in the estimate
    \begin{align*}
        \ghd{}(X,Y) \leq \delta \Big(\frac{2D}{\eps} + 1\Big),
    \end{align*}
    which holds true for any $\delta > 0$ satisfying $\ghd{\eps}(X,Y) < \delta < \frac{\eps}{4}$. Thus, we have verified the inequality \eqref{eq:bi_Lip_estimate_2}. 
    
    Let $x,x' \in X$ be such that $d_{X,\eps}(x,x') \leq  \frac{\eps}{2}$. Then we choose $y,y' \in Y$ such that $(x,y),(x',y')\in\mathfrak{R}$, and incorporate \eqref{eq:distortion_correspondence} to obtain the estimates 
    $$
        |d_{Y,\eps}(y,y') -d_{X,\eps}(x,x')| \leq \text{dis}_\eps(\mathfrak{R})\leq 2\delta < \frac{\eps}{2}
        \quad \text{and} \quad
        d_{Y,\eps}(y,y') < \eps.
    $$
    Therefore, 
    \[
    d_{X,\eps}(x,x') = d_X(x,x') \quad  \text{and} \quad  d_{Y,\eps}(y,y') = d_Y(y,y').
    \]
    In particular, we have proven that for any $(x,y),(x',y') \in \mathfrak{R}$ we have that
    $$
        |d_X(x,x') - d_Y(y,y')|\leq 2\delta,
    $$
    if $d_{X,\eps}(x,x')< \frac{\eps}{2}$. 

    Now let $(x_a,y_a), (x_b,y_b) \in \mathfrak{R}$ be arbitrary. Since $X$ is a compact length space, the points $x_a$ and $x_b$ can be connected by a distance minimizing curve. Thus, we can find a finite number of points $x_0, \ldots, x_{N}$ with $x_0=x_a$ and $x_N = x_b$ which satisfy the following three properties:
    \begin{align}
        &d_X(x_a,x_b) = \sum_{i=1}^N d_X(x_i,x_{i-1}), 
        &&d_X(x_i,x_{i-1}) = \frac{\eps}{2},
        &&&d_X(x_N,x_{N-1}) < \frac{\eps}{2} \label{eq:splitting}.  
    \end{align}
    By combining (\ref{eq:splitting}) we obtain the estimate $N \leq \frac{2}{\eps}d_X(x_a,x_b) + 1$. Since $\mathfrak{R}$ is a correspondence between $X$ and $Y$, we can find points $y_1,\ldots,y_{N-1}\in Y$ such that $(x_i,y_i)\in \mathfrak{R}$. Now triangle inequality,  (\ref{eq:splitting}), the previous part of the proof and the assumption $\diam(X)\leq D$ imply that 
    \begin{align*}
        d_Y(y_a,y_b) - d_X(x_a,x_b) 
        &\leq 2\delta\Big(\frac{2D}{\eps} + 1\Big).
    \end{align*}
    Since $Y$ is also a compact length space, whose diameter does not exceed $D$, we can repeat the steps above to show that
    $$
        |d_X(x_a,x_b) - d_Y(y_a,y_b)| \leq 2\delta \Big(\frac{2D}{\eps} + 1\Big), \quad \text{for all } (x_a,y_a), (x_b,y_b)\in\mathfrak{R}.
    $$
    Hence, the distortion of $\mathfrak{R}$ between $(X,d_X)$ and $(Y,d_Y)$ has the upper bound 
    $
         2\delta\Big(\frac{2D}{\eps} + 1\Big).   
    $
    For the other case, we assume that $\ghd{\eps}(X,Y)\geq \frac{\eps}{4}$. Since the diameters are bounded from above by the number $D$ we get from the estimate \eqref{prop:ghd_upper_bound} that 
    $$
        \ghd{}(X,Y)\leq \frac{D}{2} < \frac{D}{2} + \frac{\eps}{4} = \Big(\frac{2D}{\eps} + 1\Big)\Big(\frac{\eps}{4}\Big) \leq \Big(\frac{2D}{\eps} +1\Big)\ghd{\eps}(X,Y),
    $$
as claimed.
\end{proof}


\begin{rem}
\label{rem:truncation_and_isometry}
    We note that in the light of Remark \ref{rem:non_isometric_trunc} that part (II) of Proposition \ref{prop:ghd_equivalent} is not true without the length space assumption. We provide an example for the Lipschitz estimate of Part (II) of Proposition \ref{prop:ghd_equivalent}. Consider the closed intervals $[0,1]$ and $[0,2]$ and recall that their Gromov--Hausdorff distance is exactly $\frac{1}{2}$. For $\eps>0$ we consider the truncated intervals $[0,1]_\eps$ and $[0,2]_\eps$.
    If we choose $\eps = \frac{3}{4}$ and $D =2$, then
    \[
    \ghd{}([0,1]_\eps,[0,2]_\eps)\geq \frac{1}{8}.
    \]
    Since $\diam([0,1]), \diam([0,2])\leq D$ we have that $\frac{2D}{\eps} + 1 = \frac{19}{3}$, and
    $$
        \ghd{}([0,1],[0,2]) = \frac{1}{2} < \frac{19}{24} = \Big(\frac{19}{3}\Big)\Big(\frac{1}{8}\Big) \leq \Big(\frac{2D}{\eps} + 1\Big)\ghd{\eps}([0,1],[0,2]).
    $$
\end{rem}

We are ready to provide a proof for our first main Theorem \ref{thm:stability_2}.

\begin{proof}[Proof of Theorem~\ref{thm:stability_2}]
We first assume that 
\begin{equation}
    \label{eq:dist_of_data}
\hd{C(S_1)}(\CRR{X_1,S_1}(X_1),\CRR{X_2,S_2,\phi}(X_2))
 < \eps.
\end{equation}
By following the steps of the proof of Theorem \ref{thm:stability_1} we get from the assumption \eqref{eq:dist_of_data} that
\[
\ghd{\eps}((X_1,d_1),(X_2,d_2))
\leq 
\hd{C(S_1)}(\CRR{X_1,S_1}(X_1),\CRR{X_2,S_2,\phi}(X_2)).
\]

From here we use Part (II) of Proposition \ref{prop:ghd_equivalent} in conjunction with the assumed diameter bound to see that the inequality \eqref{eq:dist_of_manifolds} is true.
Then we suppose that the estimate \eqref{eq:dist_of_data} is false. Since the diameters of the truncated manifolds do not exceed the number $\eps$ we get from \eqref{prop:ghd_upper_bound} that 
$
\ghd{\eps}((X_1,d_1),(X_2,d_2))
\leq 
\frac{\eps}{2}.
$
Thus, the inequality \eqref{eq:dist_of_manifolds} follows trivially from Part (II) of Proposition \ref{prop:ghd_equivalent}.
The final uniqueness claim follows from the estimate \eqref{eq:dist_of_manifolds}  since compact metric spaces whose Gromov--Hausdorff distance is zero are isometric. 
\end{proof}

\section{Examples of \texorpdfstring{$\flie{\eps}$}{FLIEe}-manifolds}
\label{Sec:Appendix}

\subsection{Examples of \texorpdfstring{$\flie{\eps}$}{FLIEe} domains in \texorpdfstring{$\mathbb{R}^n$}{Rn}}

In this section we consider a smooth and bounded domain $\Omega \subset \R^n$ equipped with the natural length metric $d_\Omega$.  Note that if $\p \Omega$ has concave parts then $d_\Omega$ may not coincide with the Euclidean distance.
We provide three examples of $\flie{\eps}$-space $(\overline{\Omega},d_\Omega)$ with the closed measurement set $\partial\Omega$. As a basic example, due to Proposition \ref{prop:global_isometry}, all smooth and bounded convex subsets of $\R^n$ satisfy the $\flie{\eps}$-condition, for every $\eps>0$. 

\medskip

Next we show that the annulus $\Omega = B(0,R)\backslash \overline{B(0,r)} \subset \R^n$, for $0<r<R$, is $\flie{\eps}$ with $\eps = \frac{1}{2}\pi r$. 
Consider points $x_1,x_2\in \Omega$ such that $d_\Omega (x_1,x_2) < \frac{1}{2}\pi r$ and let $\sigma$ be a distance minimizing curve in $\overline{\Omega}$ connecting $x_1$ to $x_2$ which is not a straight line.
We can write $\sigma=\sigma_2 \circ \alpha \circ \sigma_1$ where $\sigma_i$ is a line segment from $x_i$ to $\mathbb{S}^{n-1}(r)$ and $\alpha$ is a part of a  great circle on $\mathbb{S}^{n-1}(r)$ such that $\alpha$ and $\sigma_i$ have the same velocity when they meet.
Thus, $d_\Omega(x_1,x_2) = s(x_1) + \ell + s(x_2)$ where $s(x_i)$ is the length of $\sigma_i$ and $\ell$ is the length of $\alpha$. 
Due to Pythagorean theorem $\sigma_i$ has length $s(x_i)=\sqrt{\|x_i\|^2-r^2}$. Furthermore, with respect to the round metric the distance of the intersection points $\mathbb{S}^{n-1}(r)\cap \sigma_i = \{w_i\}$ from the orthogonal projections $Px_i$ of $x_i$ onto $\mathbb{S}^{n-1}(r)$ is $r \delta(\|x_i\|))$, where $\delta(t) = \arccos(\frac{r}{t})$ for $t \geq r$.
That is,
$w_i \in C(x_i)
$
where these sets are boundaries of metric balls on the sphere $\mathbb{S}^{n-1}(r)$ centered at $Px_i$ having radius $r\delta(\|x_i\|)$ with respect to the round metric.

We extend $\sigma_2$ by a straight line to some point $z \in \mathbb{S}^{n-1}(r)$ and call this curve $\tilde \sigma_2$. We claim that the curve $\tilde \sigma=\tilde \sigma_2 \circ \alpha \circ \sigma_1$, whose length is  $s(x_1) + \ell + s(z)$,
is a distance minimizing curve from $x_1$ to $z$. 
To verify this, we recall that $\alpha$ is the section of $\sigma$ that is a segment of a great circle $\tilde \alpha$ say of $\mathbb{S}^{n-1}(r)$. 
Due to symmetries both points $Px_1$ and $Pz$ lie on the great circle $\tilde \alpha$. In fact, since $\sigma$ has length less than $\frac{1}{2}\pi r$, the distance $d_\Omega(Px_1,Pz)$ 
is strictly less than $\pi r$. Therefore, $\tilde\alpha$ is the only great circle containing both $Px_1$ and $Pz$, and $\alpha$ is a distance minimizing curve between the disjoint sets  
$C(x_1)$ and $C(z)$
which contain $w_1$ and $w_2$ respectively. 
Finally, we note that by following the same reasoning as above any distance minimizing curve from $x_1$ to $z$ needs to have the same structure as $\tilde \sigma$: starting as a line segment from $x_1$ to  $C(x_1)$ from where continuing as a segment of a great circle to $C(z)$, and ending as a line segment to $z$. By the previous argument $\tilde \sigma$ is a distance minimizing curve from $x_1$ to $z$.

\medskip

It was shown in \cite[Theorem 6.2]{wolter1985cut} that the condition (C) of Proposition \ref{prop:global_isometry} is satisfied for a planar domain if and only if the domain is simply connected. Thus, the closure of a planar domain $\Omega$ satisfies $\flie{\eps}$ for all $\eps > 0$ if and only if $\overline{\Omega}$ is simply connected. Hence, the global isometry property of the travel time map can be satisfied by some non-convex sets. 

\subsection{A spherically symmetric \texorpdfstring{$\flie{\eps}$}{AE}-manifold with a convex boundary and interior conjugate points}

This appendix provides an example of a Riemannian manifold  with strictly convex boundary and interior conjugate points,  which satisfies the $\flie{\eps}$-property for some $\eps > 0$. We provide numerical evidence for this claim by using Wolfram Mathematica.

\subsubsection{Herglotz manifolds and the \texorpdfstring{$\flie{\eps}$}{AE}-property}
We consider the closed Euclidean unit disk of $\R^2$ equipped with a radial Riemannian metric $g$ conformal to the Euclidean metric $e$. We write $g = c^{-2}e$ where $c$ is a positive smooth function called the sound speed. In Euclidean polar coordinates $(r,\theta)$ of the disk we write
\begin{equation}
g(r) = \frac{dr^2+r^2d\theta^2}{c(r)^2}.
\end{equation}


We say that the metric $g$ satisfies the \textit{Herglotz} condition if
\begin{equation}
\frac{d}{dr}
\left(
\frac{r}{c(r)}
\right)
>
0, \quad \text{ for all } r\in [0,1].
\end{equation}
If $M$ is the unit disk in $\R^2$ and $g$ is a spherically symmetric Riemannian metric on $M$ satisfying the Herglotz condition, we say that $(M,g)$ is a Herglotz manifold.

The spherically symmetric manifold $(M,g)$ is non-trapping if and only if the Herglotz condition is satisfied. This is also equivalent to the property: The level sets, circles in $\R^2$, of the sound speed $c$ are strictly convex with respect to the metric $g$. Thus, the boundary of a Herglotz manifold is strictly convex. Furthermore, if the Herglotz condition is satisfied then any geodesic of $(M,g)$ has a unique point closest to the origin, which we call the \textit{tipping point}. If the geodesic $\gamma$ is parameterized by the arc length then the tipping point of $\gamma$ is $\gamma(L(\gamma)/2)$, where $L(\gamma)$ stands for the length of $\gamma$. 

We divide every geodesic into two parts with respect to the tipping point. We call these parts the halves of the geodesics, and recall that in the polar coordinates on each half the radial component $r(t)$ of the geodesic $\gamma(t)$ is strictly monotonic. We have $\dot r < 0$ on one of the halves and $\dot r > 0$ on the other.

The following proposition gives a characterization of $\flie{\eps}$-property on Herglotz manifolds. Later we use this proposition to numerically verify that our example satisfies the $\flie{\eps}$-property.

\begin{prop}
\label{prop:herglotz-halfgeodesics}
Let $\eps > 0$. The following hold for a Herglotz manifold $(M,g)$.
\begin{enumerate}
    \item
    If $(M,g)$ is $\flie{\eps}$ then there is a uniform $\delta>0$ such that each geodesics $\gamma$ is minimizing on the segments $[0,\frac{L(\gamma)}{2}+\delta]$ and $[\frac{L(\gamma)}{2}-\delta,L(\gamma)]$. In particular, each geodesic is minimizing past the tipping point.
    \item
    If there is a uniform $\delta > 0$ so that any geodesic $\gamma$ with $\gamma(0) \in \partial M$ is minimizing from $\gamma(0)$ to $\gamma(\frac{L(\gamma)}{2} + \delta)$, then $(M,g)$ is $\flie{\eps}$ for all $0 < \eps < 2\delta$.
\end{enumerate}
\end{prop}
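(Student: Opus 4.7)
The plan is to leverage the geometric characterization of the $\flie{\eps}$-property in Proposition~\ref{thm:local_isometry}: the property holds if and only if every pair of points $p, q$ with $d(p,q) < \eps$ admits a distance minimizing curve between them which extends as a distance minimizer to some boundary point. On a Herglotz manifold, short minimizers are unique geodesic segments, and by uniqueness of the geodesic flow any minimizing extension must continue along the same underlying geodesic. This translates the FLIE characterization directly into statements about initial segments of boundary-to-boundary geodesics.

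For Part (1), fix a unit-speed geodesic $\gamma\colon [0, L] \to M$ with $\gamma(0), \gamma(L) \in \partial M$ and pick any $s \in (0, \eps/2)$ with $s \leq L/2$. Set $p = \gamma(L/2 - s)$ and $q = \gamma(L/2 + s)$. Close to the tipping point, the arc of $\gamma$ from $p$ to $q$ is the unique minimizer, so $d(p,q) = 2s < \eps$. The FLIE characterization then produces an extension of this arc along $\gamma$ all the way to $\partial M$, either forward to $\gamma(L)$ or backward to $\gamma(0)$. The key step is to invoke the spherical symmetry: reflection in the radial line through the tipping point of $\gamma$ is an isometry of $M$ that interchanges the two halves of $\gamma$. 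Applying it converts whichever extension we have into the extension in the opposite direction, so \emph{both} $\gamma|_{[0, L/2 + s]}$ and $\gamma|_{[L/2 - s, L]}$ are minimizing. Letting $\delta$ be any number strictly less than $\eps/2$ then gives the uniform constant independent of $\gamma$.

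For Part (2), fix $\eps \in (0, 2\delta)$ and take $p \neq q$ in $M$ with $d(p,q) < \eps$. Let $\gamma\colon [0, L] \to M$ be the boundary-to-boundary geodesic extending the unique short minimizer joining them, parametrized so that $\gamma(t_p) = p$, $\gamma(t_q) = q$, $t_p < t_q$, and $t_q - t_p = d(p,q) < 2\delta$. If $t_q \leq L/2 + \delta$, the hypothesis gives that $\gamma|_{[0, t_q]}$ is minimizing, producing the required extension backward from $p$ to $\gamma(0) \in \partial M$. Otherwise $t_q > L/2 + \delta$, and since $t_q - t_p < 2\delta$ we have $t_p > L/2 - \delta$; applying the hypothesis to the reversed geodesic $t \mapsto \gamma(L - t)$, which has the same tipping point, shows that $\gamma|_{[t_p, L]}$ is minimizing, giving an extension forward from $q$ to $\gamma(L) \in \partial M$. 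In either case the FLIE characterization is verified.

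The main subtlety is to confirm that the minimizing extensions in Part (1) genuinely travel along $\gamma$ rather than branching off along some other geodesic through $p$ or $q$; this uses uniqueness of the geodesic with prescribed initial point and velocity together with the tangential matching forced by concatenation of minimizers. A minor technicality concerns very short geodesics with $L(\gamma) < 2\delta$, where the stated segments exhaust $\gamma$ and the claim degenerates to the whole geodesic being minimizing, which follows by applying the hypothesis directly at its endpoints.
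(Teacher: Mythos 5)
Your overall strategy is the same as the paper's: reduce to the characterization of $\flie{\eps}$ from Proposition~\ref{thm:local_isometry}, use uniqueness of the short minimizer to force the boundary-reaching extension to run along $\gamma$ itself, and use the rotational/reflective symmetry to upgrade ``one side extends'' to ``both sides extend.'' Your Part~(2) is correct, and your case split on whether $t_q \le \frac{L(\gamma)}{2}+\delta$ (applying the hypothesis to the reversed geodesic in the other case) is in fact slightly cleaner than the paper's split into ``same half / opposite halves.''

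The gap is in Part~(1). You take $p=\gamma(\frac{L}{2}-s)$, $q=\gamma(\frac{L}{2}+s)$ for \emph{any} $s<\eps/2$, assert that the arc of $\gamma$ between them is the unique minimizer so that $d(p,q)=2s<\eps$, and conclude that $\delta$ may be taken to be any number below $\eps/2$. That assertion is unjustified and false in general: the Herglotz manifolds of interest here are precisely those with interior conjugate points, so a sub-arc of $\gamma$ of length $2s$ centered at the tipping point need not be minimizing, let alone uniquely minimizing, once $2s$ is allowed to approach $\eps$ (which may be large). If the arc is not the unique minimizer, the $\flie{\eps}$ characterization only hands you an extendable minimizer between $p$ and $q$ that may branch off $\gamma$ entirely, and the argument that $\gamma$ minimizes past its tipping point collapses. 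The conclusion you actually need --- existence of \emph{some} uniform $\delta>0$ --- survives once you cap $s$ appropriately: the paper takes $\delta=\min\{\eps/3,\,i(M,g)/3\}$, so that $2\delta$ is below the injectivity radius and the sub-arc is genuinely minimizing and is the unique minimizer between its endpoints. With that one correction (and dropping the claim that $\delta$ can be anything below $\eps/2$), your Part~(1) coincides with the paper's proof.
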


\begin{proof}
Claim 1:
    We recall that $i(M,g)$ stands for the injectivity radius of $(M,g)$, and  choose 
    $
    \delta=\min\left\{\frac{\eps}{3},\frac{i(M,g)}{3}\right\}.
    $
    Then we consider any unit speed geodesic $\gamma \colon [0,L(\gamma)] \to M$ so that $\gamma(\frac{L(\gamma)}{2})$ is the tipping point and $\gamma(0)$ and $\gamma(L(\gamma))$ are on the boundary.
    Then the points  $p=\gamma(\frac{L(\gamma)}{2}) - \delta)$ and $q = \gamma(\frac{L(\gamma)}{2}) + \delta)$ are closer than $\eps$ to each other and the curve $\gamma$ is the only minimizing curve from $p$ to $q$. Since $(M,g)$ is $\flie{\eps}$ at least one of the geodesic segments $\gamma|_{[0,\frac{L(\gamma)}{2})+\delta]}$ or $\gamma|_{[\frac{L(\gamma)}{2}-\delta,L]}$ is minimizing. Thus, by the spherical symmetry they both are. We have proved the claim in item (1).

Claim 2:
    We assume that each geodesic $\gamma$ with $\gamma(0) \in \partial M$ is minimizing from $\gamma(0)$ to 
    \\
    $\gamma(\frac{L(\gamma)}{2})+\delta)$, and choose $0 < \eps < 2\delta$. Suppose that $p,q \in M$ are so that $d(p,q) < \eps$. Since Herglotz-manifolds are non-trapping it follows from  Hopf-Rinow  theorem there is a unit speed geodesic $\gamma$ parametrized so that $\gamma(0) \in \partial M$, $\gamma(t) = p$ and $\gamma(s) = q$ for some $t < s$ and $\gamma|_{[t,s]}$ is minimizing. There are two cases on how the points $p$ and $q$ can lie on the geodesic $\gamma$. Either they are on the same half geodesic  or on the opposite sides of the tipping point $\gamma(\frac{L(\gamma)}{2})$.

    If $p$ and $q$ are on the same half geodesic, then by the radial symmetry we can assume that $s,t \in [0,\frac{L(\gamma)}{2}]$. Thus, by the assumption $\gamma$ is minimizing from $\gamma(0)$ to $q$. If $p$ and $q$ are on opposite sides of the tipping point $\gamma(\frac{L(\gamma)}{2})$ we have by the assumption 
    $
    d(p,q)<\eps<2\delta,
    $
    that $s,t \in [\frac{L(\gamma)}{2}-\delta,\frac{L(\gamma)}{2}+\delta]$. Therefore our assumption gives that $\gamma$ is minimizing from $\gamma(0)$ to $q$. 
\end{proof}

\subsubsection{Example Herglotz manifolds}
Consider the unit disk $M$ equipped with the Riemannian metric $g = c^{-2}e$ where the sound speed is defined by
\begin{equation}
\label{eqn:herglotz-example}
c(r)
=
\exp
\left(
-\frac k2
\exp
\left(
-\frac{r^2}{2\sigma^2}
\right)
\right)
\end{equation}
with $k=1.6$ and $\sigma=0.4$.
The behavior of this family of Riemannian metrics as a function of the two parameters was considered in~\cite[p.~16]{Monard2014}.
It is is easy to verify numerically that this metric satisfies the Herglotz condition, and Figure~\ref{fig:opening-angle} shows the existence of interior conjugate points.
Two intersecting geodesics of this metric are depicted in Figure~\ref{fig:opening-angle-test}.

We verify numerically with Wolfram Mathematica that it satisfies the $\flie{\eps}$-property for some $\eps > 0$.
The Mathematica code can be found in the accompanying file \texttt{conjugate-points-example.nb}.

\subsubsection{Interior conjugate points}
We follow the conventions and notations of~\cite{dHIK2022}.
For a given radius $r \in [0,1]$ let $\gamma(r)$ be a geodesic with end points on the unit circle and the tipping point at radius $r$. There are multiple choices for $\gamma(r)$ but for our purposes we can choose any. Recall that the length of either half of $\gamma(r)$ depends only on $r$ and can be computed by the formula
\begin{equation}
L(r)
=
\int_r^1
\frac{1}{c(s)}
\left(
1
-
\left(
\frac{rc(s)}{sc(r)}
\right)^2
\right)^{-1/2}
\,ds.
\end{equation}
The angular distance between the tipping point and either of the end points of $\gamma(r)$ also only depends on $r$ and can be computed by the formula
\begin{equation}
\label{eqn:opening-angle}
\alpha(r)
=
\int_r^1
\frac{rc(s)}{c(r)s^2}
\left(
1
-
\left(
\frac{rc(s)}{sc(r)}
\right)^2
\right)^{-1/2}
\,ds.
\end{equation}
We call $\alpha$ the opening angle. Note that $\alpha$ is $C^1$. It was shown in~\cite[Lemma 4.4]{dHIK2022} that the set of radii $r \in [0,1]$ so that the end points of $\gamma(r)$ are conjugate to each other along $\gamma(r)$ is
\begin{equation}
C
=
\{\,
r \in [0,1]
\,:\,
\alpha'(r) = 0
\,\}.
\end{equation}
This can be extended to give a method for confirming that the end points of $\gamma(r)$ on the level set $\{r = r_0\}$ are conjugate to each other along $\gamma(r)$. For a given radius $r_0 \in [0,1]$ define the function $\tilde{\alpha}(r;r_0)$ by replacing the upper limit of the integral in~\eqref{eqn:opening-angle} with $r_0$. Then $\tilde{\alpha}(r;r_0)$ computes the angular distance between the tipping point and either of the end points of $\gamma(r)$ on $\{r = r_0\}$. The proof of~\cite[Lemma 4.4]{dHIK2022} shows that the set of radii such that the end points of $\gamma(r)$ on $\{r=r_0\}$ are conjugate to each other along $\gamma(r)$ is
\begin{equation}
C_{r_0}
=
\{\,
r \in [0,r_0]
\,:\,
\partial_r\tilde{\alpha}(r;r_0) = 0
\,\}.
\end{equation}
A plot of the modified opening angle $\tilde{\alpha}$ for the metric $g$ is shown in Figure~\ref{fig:opening-angle}.

\begin{figure}
\caption{Two illustrations of our example of a Herglotz manifold.}
\label{fig:herglotz-wrapper}
\begin{subfigure}[t]{.45\textwidth}
    \centering
    \includegraphics[width=1.0\linewidth]{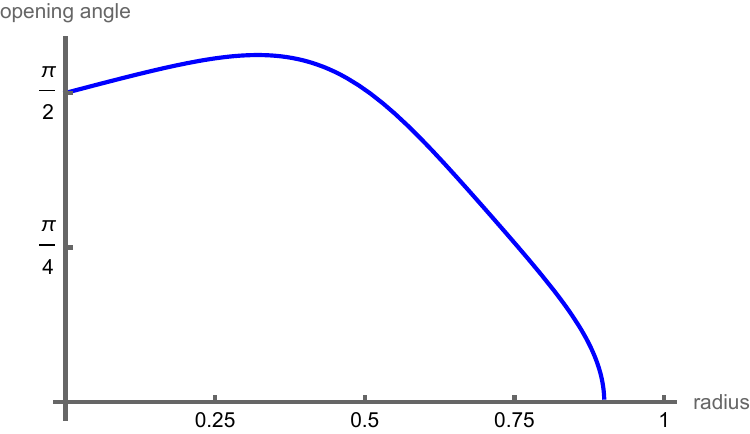}
    \caption{A numerical plot of the (half) opening angle $\tilde\alpha(r;r_0)$ as a function of $r$ for $r_0=.9$.
    There is clearly a critical point somewhere near $r=.32$, and the condition $\partial_r\tilde\alpha(r,r_0)=0$ corresponds exactly to a pair of conjugate points at radius $r_0$.
    Our model therefore has interior conjugate points.
    The opening angle at $r=0$ is always $\frac\pi2$.
    }
    \label{fig:opening-angle}
\end{subfigure}%
\hfill%
\begin{subfigure}[t]{.45\textwidth}
    \centering
    \includegraphics[width=0.8\linewidth,trim={0 .9cm 0 .9cm},clip]{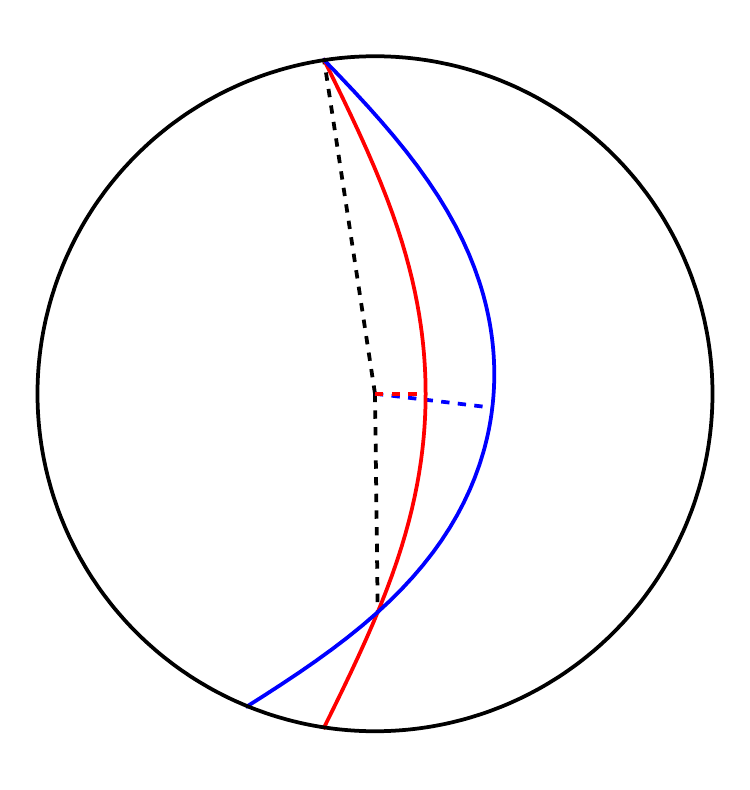}
    \caption{The \textcolor{red}{red} and \textcolor{blue}{blue} curves are geodesics of the sound speed of our example. The dashed lines indicate the location of the tipping points of the two geodesics. The dashed black lines highlight the intersection points of the two geodesics. See text for how such twice intersecting pairs of geodesics are detected numerically.
    }
    \label{fig:opening-angle-test}
\end{subfigure}
\end{figure}

\subsubsection{Verification of the  \texorpdfstring{$\flie{\eps}$}{AE}-property}
To verify that $(M,g)$ satisfies the $\flie{\eps}$-property we implement the following test numerically.

We begin with finding intersecting pairs of geodesics.
For a given radius $r_0 \in [0,1]$ find all radii $r_1 \in [0,1]$ so that the geodesics $\gamma(r_0)$ and $\gamma(r_1)$ have a common end point and intersect an additional time at radius $r_2 \in [\max(r_0,r_1),1)$ away from the origin. Let $S$ be the set of such radii triplets.

    We use the following strategy to find the set $S$ numerically. Define the another variant of the opening angle function by
    \begin{equation}
    \alpha(r;r',r'')
    =
    \int_{r'}^{r''}
    \frac{rc(s)}{c(r)s^2}
    \left(
    1
    -
    \left(
    \frac{rc(s)}{sc(r)}
    \right)^2
    \right)^{-1/2}
    \,ds
    \end{equation}
    for radii $0 < r \le r' \le r'' \le 1$. That is $\alpha(r;r',r'')$ computes the angular distance along the geodesic $\gamma(r)$ between its point at radius $r'$ and its point at radius $r''$. Then we see that $(r_0,r_1,r_2) \in S$ if and only if
    \begin{equation}
    \label{eqn:angle-test1}
    \alpha(r_0;r_0,r_2)
    +
    \alpha(r_0;r_0,1)
    =
    2\pi m\pm
    \alpha(r_1;r_2,1)
    \end{equation}
    or
    \begin{equation}
    \label{eqn:angle-test2}
    \alpha(r_0;r_0,r_2)
    +
    \alpha(r_0;r_0,1)
    =
    2\pi m\pm
    [\alpha(r_1;r_1,r_2)
    +
    \alpha(r_1;r_1,1)
    ]
    \end{equation}
    for some $m\in\Z$.
    The integer $m$ describes the relative winding number of the two geodesic segments.
    Conditions~\eqref{eqn:angle-test1} and~\eqref{eqn:angle-test2} can be tested explicitly. The strategy used to find $S$ is illustrated in Figure~\ref{fig:opening-angle-test}.
    

For all triplets of radii in $S$ we compute the lengths of the segments of the two geodesics between the shared points similarly to the angle computation described above.
We define a subset $\tilde S\subset S$ keeping only the cases ones where minimization fails, and for $\tilde S$ we compute the amount of time each geodesic extends beyond the midpoint.
Numerical computations suggest that these times are bounded from below by $0.29$.
Therefore in Proposition~\ref{prop:herglotz-halfgeodesics} we can set $\delta=0.29$ and conclude that our example manifold is $\flie{0.29}$.

\subsection{A closed \texorpdfstring{$\flie{\eps}$}{AE}-manifold with pairs of conjugate points outside the measurement set}

    We consider a closed Riemannian manifold with a measurement set $S$ which is a closure of an open set. In this example we are allowed to measure distances through $S$.
    Let $\Gamma \subset \mathbb{S}^2$ be a half of a great circle connecting the north and the south poles. We observe that this is the smallest set of $\mathbb{S}^2$ which intersects with all great circles. Thus, in the light of Lemma \ref{lem:lenght_of_geos} our measurement set should include this type of set. However, as explained in the introduction of this paper the travel time map $\CRR{\mathbb{S}^2,\Gamma}$  is not one-to-one. Thus, we study a closure of a neighborhood of the set $\Gamma$.  
    
    Let $r>0$ be small and define
    \[
    S=\overline{\bigcup_{x \in \Gamma} B(x,r)}.
    \]
     Clearly the set $\mathbb{S}^2 \setminus S$ has many circular arcs  which are longer than $\pi$. Thus, the set $\mathbb{S}^2 \setminus S$ has conjugate pairs and not all geodesics contained in this set are globally distance minimizing.  
    We claim that the sphere $\mathbb{S}^2$ with the closed measurement set $S$ satisfies the $\flie{\eps}$-property for  $\eps=r$, but not for all $\eps>0$.

    First we note that each great circle meets the set $S$ and this can happen in two different ways.
    Let $C$ be a great circle and denote the polar caps $B(\pm e_3,r)$ by $P_1$ and $P_2$ respectively. If $C$ does not meet $P_1\cup P_2$ then $C \cap S$ has only one connected component whose length is at least $2r$. If $C$ meets $P_1$ then by symmetry it also meets $P_2$. Thus, by the definition of the set $S$ it holds that $C \cap S$ has two connected components and at least one of them has a length greater or equal to $2r$.  

    Let $p,q \in \mathbb{S}^2$ be two points that are not antipodal. If $p \in S$ or $q \in S$ we have nothing to prove. Thus we may assume that $p,q \in \mathbb{S}^2 \setminus S$   
    and let $C$ be the unique great circle connecting these two points. 
    Suppose that,  $d(p,q)< 2r$, and let $A$ be a connected component of $S \cap C$ which has a length larger or equal to $2r$. We split $C$ into a short and long circular arcs $C_1$ and $C_2$ which both start from $p$ and end at $q$. Since $d(p,q)<2r$ and $p,q \notin S$ the set $A$ must be contained in $C_2$. 
    
    Without loss of generality we assume that $q$ is closer to $A$ than $p$ and let $z \in A$ be the closest point of $A$ to $q$.  Since $d(p,q)<2r$ it holds that the set $-A:=\{-x: \: x \in A\}$ cannot be completely contained in $C_1$. Thus, $p$, $q$ and $z$ must all be in the same half circle of $C$ which is then the shortest path of $\mathbb{S}^2$ from $p$ to $z$. We have proved that 
   $ d(p,z)=d(p,q)+d(q,z).
   $
    This equality implies that $\mathbb{S}^2$ and closed measurement set $S$ satisfy the $\flie{\eps}$-property for $\eps=r$. 

    Finally, we would like to emphasize that since we are allowed to measure the distances through the set $S$ it does not matter if the set $C_1 \cap (C\cap S)$ is not empty. However, the choice $d(p,q)<2r$ is crucial since if the $d(p,q) \geq 2r$ the set $C_1$ can contain $A$ or $-A$ and it would be always faster to go from $p$ to $z$ along $C_2$. In this case 
    the $\flie{\eps}$ property fails. Thus, $\mathbb{S}^2$ with the closed measurement set $S$ is not $\flie{\eps}$ for all $\eps>0$.
    
\bibliographystyle{abbrv}
\bibliography{bibliography}

\end{document}